\newtheorem{thm}{Theorem}[section]
\newtheorem{prop}[thm]{Proposition}
\newtheorem{cor}[thm]{Corollary}
\numberwithin{equation}{section}
\begin{document}

\title{Contact hypersurfaces in K\"{a}hler manifolds }
\author{\textsc{J\"{u}rgen Berndt} and  \textsc{Young Jin Suh}}

\address{King's College London\\ Department of Mathematics \\  London WC2R 2LS \\ United Kingdom}

\address{Kyungpook National University\\ Department of Mathematics \\  Taegu 702-701 \\ South Korea}

\date{}

\begin{abstract}
A contact hypersurface in a K\"{a}hler manifold is a real hypersurface for which the induced almost contact metric structure determines a contact structure. We carry out a systematic study of contact hypersurfaces in K\"{a}hler manifolds. We then apply these general results to obtain classifications of contact hypersurfaces with constant mean curvature in the complex quadric $Q^n = SO_{n+2}/SO_nSO_2$ and its noncompact dual space $Q^{n*} = SO^o_{n,2}/SO_nSO_2$ for $n \geq 3$.
\end{abstract}

\maketitle
\thispagestyle{empty}

\footnote[0]{2010 \textit{Mathematics Subject Classification}: Primary 53C40. Secondary 53C55, 53D15.\\
\textit{Key words}: Real hypersurfaces, contact hypersurfaces, K\"{a}hler manifolds, complex quadric}

\section{Introduction}
A contact manifold is a smooth $(2n-1)$-dimensional manifold $M$ together with a one-form $\eta$ satisfying $\eta \wedge (d\eta)^{n-1} \neq 0$, $n \geq 2$. The one-form $\eta$ on a contact manifold is called a contact form. The kernel of $\eta$ defines the so-called contact distribution ${\mathcal C}$ on $M$. Note that if $\eta$ is a contact form on a smooth manifold $M$, then $\rho\eta$ is also a contact form on $M$ for each smooth and everywhere nonzero function $\rho$ on $M$. The origin of contact geometry can be traced back to Hamiltonian mechanics and geometric optics. 

A standard example is a round sphere in an even-dimensional Euclidean space. Consider the sphere $S^{2n-1}(r)$ with radius $r \in {\mathbb R}_+$ in ${\mathbb C}^n$ and denote by $\langle \cdot , \cdot \rangle$ the inner product on ${\mathbb C}^n$ given by
$\langle z , w \rangle = {\rm Re} \sum_{\nu = 1}^n z_\nu \bar{w}_\nu$.
By defining
$\xi_z = -\frac{1}{r}iz$ for $z \in S^{2n-1}(r)$
we obtain a unit tangent vector field $\xi$ on $S^{2n-1}(r)$. We denote by $\eta$ the dual one-form given by
$\eta(X) = \langle X , \xi \rangle $
 and by $\omega$ the K\"{a}hler form on ${\mathbb C}^n$ given by
$\omega(X,Y) = \langle iX, Y \rangle$.
A straightforward calculation shows that
$d\eta(X,Y) = -\frac{2}{r}\omega(X,Y)$.
Since the K\"{a}hler form $\omega$ has rank $2(n-1)$ on the kernel of $\eta$ it follows that
$\eta \wedge (d\eta)^{n-1} \neq 0$.
Thus $S^{2n-1}(r)$ is a contact manifold with contact form $\eta$. 
This argument for the sphere motivates a natural generalization to K\"{a}hler manifolds. 

Let $(\bar{M},J,g)$ be a K\"{a}hler manifold of complex dimension $n$ and let $M$ be a connected oriented real hypersurface of $\bar{M}$. The K\"{a}hler structure on $\bar{M}$ induces an almost contact metric structure $(\phi,\xi,\eta,g)$ on $M$. The Riemannian metric on $M$ is the one induced from the Riemannian metric on $\bar{M}$, both denoted by $g$. The orientation on $M$ determines a unit normal vector field $N$ of $M$. The so-called Reeb vector field $\xi$ on $M$ is defined by $\xi = -JN$ and $\eta$ is the dual one form on $M$, that is, $\eta(X) = g(X,\xi)$. The tensor field $\phi$ on $M$ is defined by $\phi X  = JX - \eta(X)N$. Thus $\phi X$ is the tangential component of $JX$. The tensor field $\phi$ determines the fundamental $2$-form $\omega$ on $M$ by $\omega(X,Y) = g(\phi X,Y)$. $M$ is said to be a contact hypersurface if there exists an everywhere nonzero smooth function $\rho$ on $M$ such that
$d\eta = 2\rho \omega$. It is clear that if $d\eta = 2\rho \omega$ holds then $\eta \wedge (d\eta)^{n-1} \neq 0$, that is, every contact hypersurface in a K\"{a}hler manifold is a contact manifold.

Contact hypersurfaces in complex space forms of complex dimension $n \geq 3$ have been investigated and classified by Okumura \cite{O66} (for the complex Euclidean space ${\mathbb C}^n$ and the complex projective space ${\mathbb C}P^n$) and Vernon \cite{V87} (for the complex hyperbolic space ${\mathbb C}H^n$). In this paper we carry out a systematic study of contact hypersurfaces in K\"{a}hler manifolds. We will then apply our results to the complex quadric $Q^n = SO_{n+2}/SO_nSO_2$ and its noncompact dual space $Q^{n*} = SO^o_{n,2}/SO_nSO_2$. Here we consider $Q^n$ (resp.\ $Q^{n*}$) equipped with the K\"{a}hler structure for which it becomes a Hermitian symmetric space with maximal (resp.\ minimal) sectional curvature $4$ (resp.\ $-4$). The classification results for these two spaces are as follows.

\begin{thm}\label{classquadric}
Let $M$ be a connected orientable real hypersurface with constant mean curvature in the Hermitian symmetric space $Q^n = SO_{n+2}/SO_nSO_2$ and $n \geq 3$. Then $M$ is a contact hypersurface if and only if $M$ is congruent to an open part of the tube of radius $0 < r < \frac{\pi}{2\sqrt{2}}$ around the $n$-dimensional sphere $S^n$ which is embedded in $Q^n$ as a real form of $Q^n$.
\end{thm}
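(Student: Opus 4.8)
The plan is to reduce the contact condition to a pointwise condition on the shape operator, and then to use the additional geometric structure of the complex quadric to pin down that shape operator and integrate it.

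First I would record the general translation of the contact condition. For a real hypersurface of a K\"{a}hler manifold, the Weingarten formula $\bar\nabla_X N = -SX$ together with $\xi = -JN$ and $\bar\nabla J = 0$ gives $\nabla_X\xi = \phi SX$, where $S$ denotes the shape operator. Hence $d\eta(X,Y) = g((\phi S + S\phi)X,Y)$, and $M$ is a contact hypersurface precisely when
\begin{equation*}
\phi S + S\phi = 2\rho\,\phi
\end{equation*}
for a nowhere vanishing function $\rho$. Evaluating this on $\xi$ yields $\phi S\xi = 0$, so $S\xi = \alpha\xi$ and $M$ is a Hopf hypersurface; and if $SX = \lambda X$ with $X \perp \xi$, then $S\phi X = (2\rho - \lambda)\phi X$, so on the contact distribution $\mathcal C = \ker\eta$ the principal curvatures occur in pairs $\lambda, 2\rho - \lambda$ summing to $2\rho$, whence $\mathrm{tr}\,S = \alpha + 2(n-1)\rho$. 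I would then invoke the general results established earlier in the paper to conclude that $\rho$ is constant and to relate $\alpha$ to $\rho$; combined with the constant-mean-curvature hypothesis this makes all the eigenvalue data constant.

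Next I would bring in the quadric. Besides $J$, the space $Q^n$ carries an $S^1$-bundle $\mathfrak A$ of real structures (conjugations) satisfying $\bar\nabla_X A = q(X)\,JA$ for a connection one-form $q$, and its curvature tensor contains, beyond the constant-holomorphic-curvature terms $g(Y,Z)X - g(X,Z)Y + g(JY,Z)JX - g(JX,Z)JY - 2g(JX,Y)JZ$, the conjugation terms $g(AY,Z)AX - g(AX,Z)AY + g(JAY,Z)JAX - g(JAX,Z)JAY$. The decisive invariant is the position of the unit normal $N$ relative to $\mathfrak A$: at each point $N$ is either $\mathfrak A$-principal, meaning $AN = \pm N$ for some $A \in \mathfrak A$, or it has a nonzero $\mathfrak A$-isotropic component. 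I would substitute the contact relation into the Codazzi equation $(\nabla_X S)Y - (\nabla_Y S)X = (\bar R(X,Y)N)^{\top}$ and compare both sides, which couples $S$, $\rho$, $\alpha$, the form $q$, and the tensors $AN$ and $A\xi$.

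The main obstacle is to show that a contact hypersurface forces $N$ to be $\mathfrak A$-principal at every point, that is, to exclude the isotropic case. I expect this to follow by differentiating the functions $g(AN,N)$ and $g(A\xi,\xi)$ along $M$, using $\nabla_X\xi = \phi SX$, the connection form $q$, and the paired eigenvalue structure, and then showing that the contact condition is incompatible with a nonvanishing isotropic component; this rigidity is the crux of the argument. Once $N$ is known to be $\mathfrak A$-principal, the conjugation terms in the curvature align with $\xi$ and $J\xi$, the Codazzi system becomes explicitly integrable, and the constant principal curvatures are forced to be exactly those of the tube of radius $r$ around the totally geodesic real form $S^n \subset Q^n$ (for which $N$ lies in the normal space $JV$ and $AN = -N$, so the normal is indeed $\mathfrak A$-principal). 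For the converse I would verify directly, by a Jacobi-field computation along the normal geodesics of $S^n$, that this tube is a contact hypersurface with constant mean curvature whenever $0 < r < \pi/(2\sqrt2)$, the latter being the distance to the first focal point of $S^n$. A standard congruence argument, reconstructing $M$ from its focal set $S^n$, then identifies $M$ with an open part of the tube and completes the classification.
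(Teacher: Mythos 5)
Your overall architecture coincides with the paper's: translate the contact condition into $S\phi+\phi S=2\rho\phi$, deduce the Hopf property, the constancy of $\rho$ and the paired spectrum on ${\mathcal C}$, bring in the real structures $A\in{\mathcal A}$ and the curvature tensor of $Q^n$, show that $N$ is ${\mathcal A}$-principal, read off the constant principal curvatures, and finish with a focal-map/Jacobi-field rigidity argument identifying the focal set as a totally geodesic real form. The endgame and the converse are fine in outline (you should still rule out the other real form $(S^a\times S^b)/\{\pm I\}$, which the paper does by noting that the tangent spaces of the focal set lie in an eigenspace of a real structure, but that is a detail).

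The genuine gap sits exactly at what you call the crux. You propose to show that the contact condition \emph{alone} forces $N$ to be ${\mathcal A}$-principal, by differentiating $g(AN,N)$ and $g(A\xi,\xi)$ and excluding ``a nonvanishing isotropic component.'' Two problems. First, if that worked, the constant mean curvature hypothesis would be redundant: $N$ ${\mathcal A}$-principal implies $N$ singular, hence $JN$ is an eigenvector of $\bar R_N$ (Proposition \ref{JNquadric}), hence $M$ has constant mean curvature (Proposition \ref{allandrhoconstant}); and indeed nowhere in your sketch does the CMC hypothesis do the work it is needed for. The paper's mechanism is different: it first proves the general K\"ahler fact (Propositions \ref{kminustraceA} and \ref{kminustraceAconstant}) that for a contact hypersurface with $n>2$ constant mean curvature is \emph{equivalent} to $JN$ being an eigenvector of $\bar R_N$, and then computes $\bar R(JN,N)N=4JN+2\cos(2t)AJN$ in $Q^n$ to conclude that this forces $N$ to be singular, i.e.\ ${\mathcal A}$-principal or ${\mathcal A}$-isotropic. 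This is the step that eliminates the continuum of non-singular directions $\cos(t)Z_1+\sin(t)JZ_2$ with $0<t<\pi/4$, which your dichotomy ``principal or nonzero isotropic component'' lumps together with the isotropic case. Second, the differentiation argument you gesture at is carried out in the paper only for genuinely ${\mathcal A}$-isotropic $N$ (Proposition \ref{normalquadric}): one first uses the identity $2(S^2-2\rho S+\alpha\rho)X=(\bar R(JN,N)JX)_{\mathcal C}$ to show that ${\mathcal Q}$ and ${\mathcal C}\ominus{\mathcal Q}$ are $S$-invariant, and then derives $SAN=0$, $SAJN=0$ and $SAJN=2\rho AJN$ simultaneously, contradicting $\rho\neq0$; it is not clear this extends to an arbitrary non-principal direction. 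So you must insert the chain ``CMC $\Rightarrow$ $JN$ eigenvector of $\bar R_N$ $\Rightarrow$ $N$ singular'' before your exclusion argument, after which that argument only needs to dispose of the isotropic case.
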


\begin{thm}\label{classquadricdual}
Let $M$ be a connected orientable real hypersurface with constant mean curvature in the Hermitian symmeric space $Q^{n*} = SO^o_{n,2}/SO_nSO_2$ and $n \geq 3$. Then $M$ is a contact hypersurface if and only if $M$ is congruent to an open part of one of the following contact hypersurfaces in $Q^{n*}$:
\begin{itemize}
\item[(i)] the tube of radius $r \in {\mathbb R}_+$ around the Hermitian symmetric space $Q^{(n-1)*}$ which is embedded in $Q^{n*}$ as a totally geodesic complex hypersurface;
\item[(ii)] a horosphere in $Q^{n*}$ whose center at infinity is the equivalence class of an ${\mathcal A}$-principal geodesic in $Q^{n*}$;
\item[(iii)] the tube of radius $r \in {\mathbb R}_+$ around the $n$-dimensional real hyperbolic space ${\mathbb R}H^n$ which is embedded in $Q^{n*}$ as a real form of $Q^{n*}$.
\end{itemize}
\end{thm}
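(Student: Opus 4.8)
The plan is to prove both implications, dispatching the elementary "if" direction first and devoting the bulk of the work to the "only if" direction. For the "if" part I would take each of the three model hypersurfaces in turn and compute its shape operator explicitly. All three are homogeneous -- tubes around the totally geodesic submanifolds $Q^{(n-1)*}$ and $\mathbb{R}H^n$, or a horosphere -- so their principal curvatures and principal curvature spaces are obtained from standard Jacobi field / tube computations in $Q^{n*}$, using the explicit form of its curvature tensor. In each case I would read off that $\xi$ is a principal curvature vector, that the shape operator restricted to the contact distribution $\mathcal{C} = \ker\eta$ interchanges $\phi$-related eigenspaces with eigenvalues summing to a constant $2\rho$, and hence that $S\phi + \phi S = 2\rho\phi$ holds with $\rho$ everywhere nonzero and constant; the trace is then manifestly constant, so each model is a contact hypersurface with constant mean curvature.

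For the "only if" direction I would start from the general structure theory of contact hypersurfaces developed in the preceding sections, which supplies the algebraic characterization $S\phi + \phi S = 2\rho\phi$ together with its corollaries: $\xi$ is a principal curvature vector, $S\xi = \alpha\xi$, and the principal curvatures on $\mathcal{C}$ occur in $\phi$-conjugate pairs $(\lambda, 2\rho-\lambda)$, with $\phi$ carrying the $\lambda$-eigenspace onto the $(2\rho-\lambda)$-eigenspace. The next step is to feed this into the Codazzi equation of $Q^{n*}$, whose curvature tensor contains, beyond the constant-holomorphic-curvature terms, the terms built from a conjugation $A$ in the $S^1$-family $\mathcal{A}$ defining the quadric structure. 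Evaluating the Codazzi identity on $\xi$ and on suitable eigenvectors, and using $\nabla_X\xi = \phi SX$ together with the parallelism of the family ($\bar\nabla_X A = q(X)\,JA$ for a one-form $q$), produces relations among $\alpha$, $\rho$, the principal curvatures, and the tangential component of $A\xi$ -- equivalently the angle function measuring the deviation of $N$ from being $\mathcal{A}$-principal.

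The main obstacle is precisely the step that forces $\xi$ to be a singular tangent vector, that is, either $\mathcal{A}$-principal or $\mathcal{A}$-isotropic everywhere. The contact condition rigidly constrains the shape operator, but the $A$-dependent curvature terms couple the second fundamental form to the position of $\xi$ relative to the conjugation, and one must show that intermediate values of the angle function are incompatible with $S\phi+\phi S = 2\rho\phi$ and the Codazzi-derived relations. I expect this to require differentiating the angle function along $M$ and extracting an algebraic obstruction that is satisfied only at the two singular values; the sign of the ambient curvature should enter here, which is also what makes the outcome for $Q^{n*}$ richer than for the compact $Q^n$. The constant mean curvature hypothesis enters at this stage and just after: combined with the general relations controlling $d\rho$, it forces $\alpha$ and $\rho$ to be genuine constants, so that $M$ becomes isoparametric with fully determined principal curvature data.

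Once $\xi$ is known to be singular everywhere, I would split into the two cases. In the $\mathcal{A}$-isotropic case the $A$-terms contribute uniformly to $\mathcal{C}$, the eigenvalue structure is rigid, and the resulting shape operator matches that of the tube around the totally geodesic complex hypersurface $Q^{(n-1)*}$, giving case (i). In the $\mathcal{A}$-principal case there is one further degree of freedom governed by $\alpha$: a discriminant condition on $\alpha$ distinguishes two subcases, which correspond to the tube around the real form $\mathbb{R}H^n$ in the strict regime and to the horosphere with $\mathcal{A}$-principal center in the critical limit, yielding cases (iii) and (ii). In each case I would conclude with the rigidity argument standard for symmetric spaces: the connected hypersurface $M$ carries the same constant principal curvatures and principal curvature spaces as the corresponding model, so integrating the normal geodesics and identifying the focal set (respectively $Q^{(n-1)*}$, a point at infinity, or $\mathbb{R}H^n$) shows that $M$ is congruent to an open part of that model, completing the classification.
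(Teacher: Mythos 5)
Your overall skeleton (general contact-hypersurface theory, reduction to a singular normal vector, Jacobi-field identification of the focal set, rigidity of totally geodesic submanifolds) matches the paper's, and your "if" direction is the same reverse computation the paper alludes to. The genuine error is in your final case division. You send the $\mathcal{A}$-isotropic case to model (i), the tube around $Q^{(n-1)*}$, and extract only models (ii) and (iii) from the $\mathcal{A}$-principal case. Both assignments are wrong. The paper proves (Proposition \ref{normalquadric}) that the normal vector of a contact hypersurface in $Q^{n*}$ can \emph{never} be $\mathcal{A}$-isotropic: if it were, the quadratic relation from Proposition \ref{Asquaredquadric} forces $\mathcal{Q}$ and $\mathcal{C}\ominus\mathcal{Q}$ to be $S$-invariant, and then differentiating $g(AN,JN)=0$ and $g(AN,N)=0$ using $\bar\nabla_XA=q(X)JA$ gives $SAJN=0$ and $SAN=0$, which combined with the contact relation $S\phi+\phi S=2\rho\phi$ forces $\rho=0$, a contradiction. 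Hence $N$ is $\mathcal{A}$-principal everywhere, and \emph{all three} models arise from this single case via the trichotomy $\rho=\tfrac{1}{\sqrt2}\tanh(\sqrt2 r)$, $\rho=\tfrac{1}{\sqrt2}$, $\rho=\tfrac{1}{\sqrt2}\coth(\sqrt2 r)$ (equivalently $\alpha>\sqrt2$, $\alpha=\sqrt2$, $\alpha<\sqrt2$), which determines whether $\mathrm{Ker}(dF)$ is $T_\alpha$ (focal set a totally geodesic complex hypersurface $Q^{(n-1)*}$), whether there are no focal points (horosphere with $\mathcal{A}$-principal center), or whether $\mathrm{Ker}(dF)$ is $T_\mu$ (focal set a real form $\mathbb{R}H^n$). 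As written, your plan would run the $\mathcal{A}$-isotropic branch into a contradiction rather than into model (i), and your two-fold discriminant in the $\mathcal{A}$-principal branch would never produce the tube around $Q^{(n-1)*}$.

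A secondary point: you anticipate that forcing the normal to be singular is the main obstacle and will require differentiating an angle function. In the paper this step is essentially immediate: the general Proposition \ref{allandrhoconstant} shows constant mean curvature is equivalent to $JN$ being an eigenvector of the normal Jacobi operator $\bar R_N$, and a purely pointwise algebraic computation with the curvature tensor of $Q^{n*}$ (Proposition \ref{JNquadric}) shows this holds exactly when $N$ is singular. The constant mean curvature hypothesis does all the work here; no differentiation of the angle function is needed.
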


The symbol ${\mathcal A}$ refers to a circle bundle of real structures on $Q^{n*}$ and the notion of ${\mathcal A}$-principal will be explained later. Every contact hypersurface in a K\"{a}hler manifold of constant holomorphic sectional curvature has constant mean curvature. Our results on contact hypersurfaces in K\"{a}hler manifolds suggest that it is natural to impose the condition of constant mean curvature in the more general setting.

In Section \ref{general} we will develop the general theory of contact hypersurfaces in K\"{a}hler manifolds. In Section \ref{application} we will apply these results to the complex quadric $Q^n$ and its noncompact dual $Q^{n*}$.

\smallskip {\sc Acknowledgments:} This work was supported by grant Proj.\ No.\ NRF-2011-220-1-C00002 from the National Research Foundation of Korea. The second author was supported by grant Proj. NRF-2012-R1A2A2A-01043023.

\section{Contact hypersurfaces in K\"{a}hler manifolds}\label{general}

Let $\bar{M}$ be a K\"{a}hler manifold of complex dimension $n$ and let $M$ be a connected oriented real hypersurface of $\bar{M}$. The hypersurface $M$ can be equipped with what is known as an almost contact metric structure $(\phi,\xi,\eta,g)$ which consists of
\begin{itemize}
\item[1.] a Riemannian metric $g$ on $M$ which is induced canonically from the K\"{a}hler metric (also denoted by $g$) on $\bar{M}$;
\item[2.] a tensor field $\phi$ on $M$ which is induced canonically from the complex structure $J$ on $\bar{M}$: for all vectors fields $X$ on $M$ the vector field $\phi X$ is obtained by projecting orthogonally the vector field $JX$ onto the tangent bundle $TM$;
\item[3.] a unit vector field $\xi$ on $M$ which is induced canonically from the orientation of $M$: if $N$ is the unit normal vector field on $M$ which determines the orientation of $M$ then $\xi = -JN$;
\item[4.] a one-form $\eta$ which is defined as the dual of the vector field $\xi$ with respect to the metric $g$, that is, $\eta(X) = g(X,\xi)$ for all $X \in TM$.
\end{itemize}
The vector field $\xi$ is also known as the Reeb vector field on $M$. The maximal complex subbundle ${\mathcal C}$ of the tangent bundle $TM$ of $M$ is equal to ${\rm ker}(\eta)$.

Let $S$ be the shape operator of $M$ defined by $SX = -\bar\nabla_X N$ ,
where $\bar\nabla$ denotes the Levi Civita covariant derivative on $\bar{M}$. 
Applying $J$ to both sides of this equation and using the fact that $\bar\nabla J = 0$ on a K\"{a}hler manfold implies
$ \phi S X = \nabla_X\xi $,
where $\nabla$ is the induced Levi Civita covariant derivative on $M$. 
Using again $\bar\nabla J = 0$ we get
\begin{eqnarray}\label{nablaphi}
(\nabla_X\phi)Y = \eta(Y)SX - g(SX,Y)\xi . 
\end{eqnarray}

Denote by $\omega$ the fundamental $2$-form on $M$ given by
$ \omega(X,Y) = g(\phi X,Y) $.

\begin{prop}\label{closed}
The fundamental $2$-form $\omega$ on a real hypersurface  in a K\"{a}hler manifold is closed, that is, $d\omega = 0$.
\end{prop}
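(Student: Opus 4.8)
The plan is to express $d\omega$ through the Levi-Civita connection on $M$ and then reduce everything to equation \eqref{nablaphi}. The starting point is the torsion-free formula for the exterior derivative of a $2$-form,
\[
d\omega(X,Y,Z) = (\nabla_X\omega)(Y,Z) - (\nabla_Y\omega)(X,Z) + (\nabla_Z\omega)(X,Y) ,
\]
valid for any vector fields $X,Y,Z$ on $M$. Since $\omega(Y,Z) = g(\phi Y,Z)$ and the metric $g$ is parallel with respect to $\nabla$, a short computation (expanding $\nabla_X(\phi Y) = (\nabla_X\phi)Y + \phi\nabla_X Y$ inside $X(g(\phi Y,Z))$) gives $(\nabla_X\omega)(Y,Z) = g((\nabla_X\phi)Y,Z)$, so the covariant derivative of the fundamental form is entirely controlled by that of $\phi$.

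Next I would insert equation \eqref{nablaphi}. Substituting $(\nabla_X\phi)Y = \eta(Y)SX - g(SX,Y)\xi$ and using $g(\xi,Z) = \eta(Z)$ yields
\[
(\nabla_X\omega)(Y,Z) = \eta(Y)g(SX,Z) - \eta(Z)g(SX,Y) .
\]
Feeding the three cyclic instances of this identity into the formula for $d\omega$ produces six terms, which I would organize by the factors $\eta(X)$, $\eta(Y)$ and $\eta(Z)$.

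The one fact that makes everything collapse is the symmetry (self-adjointness) of the shape operator $S$, that is, $g(SX,Z) = g(SZ,X)$. With this, the two terms carrying $\eta(Y)$ combine to $\eta(Y)\bigl(g(SX,Z) - g(SZ,X)\bigr) = 0$, and similarly the pairs carrying $\eta(X)$ and $\eta(Z)$ cancel. Hence $d\omega(X,Y,Z) = 0$ for all $X,Y,Z$, which is the assertion. I do not anticipate a genuine obstacle: the entire content is the interplay between equation \eqref{nablaphi} and the symmetry of $S$, and the only point that needs care is the bookkeeping of the six terms and their signs, so that the pairwise cancellation is transparent.
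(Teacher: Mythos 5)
Your proposal is correct and follows essentially the same route as the paper: both reduce $d\omega$ to the cyclic sum $g((\nabla_X\phi)Y,Z)+g((\nabla_Y\phi)Z,X)+g((\nabla_Z\phi)X,Y)$ (you via the torsion-free formula for $\nabla\omega$, the paper via the Lie-bracket formula for $d$) and then insert equation (\ref{nablaphi}). Your explicit appeal to the symmetry of $S$ simply spells out the cancellation that the paper leaves implicit.
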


\begin{proof}
By definition, we have
\begin{eqnarray*}
d\omega(X,Y,Z) & = & d(\omega(Y,Z))(X) + d(\omega(Z,X))(Y) + d( \omega(X,Y))(Z) \\
& & - \omega([X,Y],Z)  - \omega([Y,Z],X) - \omega([Z,X],Y) \\
& = & g(\nabla_X(\phi Y),Z) + g(\phi Y,\nabla_XZ) + g(\nabla_Y(\phi Z),X) + g(\phi Z,\nabla_YX) \\
& & + g(\nabla_Z(\phi X),Y) + g(\phi X,\nabla_ZY) \\
& & - g(\phi[X,Y],Z)  - g(\phi[Y,Z],X) - g(\phi[Z,X],Y) \\
& = & g((\nabla_X \phi) Y,Z) + g((\nabla_Y\phi) Z,X) + g((\nabla_Z\phi) X,Y).
\end{eqnarray*}
Inserting the expression for $\nabla \phi$ as in (\ref{nablaphi})  into the previous equation gives $d\omega = 0$.
\end{proof}

Motivated by the example $S^{2n-1}(r) \subset {\mathbb C}^n$ we say that $M$ is a contact hypersurface of $\bar{M}$ if there exists an everywhere nonzero smooth function $\rho$ on $M$ such that
$d\eta = 2\rho \omega $
holds on $M$ (Okumura \cite{O66}). It is clear that if this equation holds then $\eta \wedge (d\eta)^{n-1} \neq 0$, that is, every contact hypersurface of a K\"{a}hler manifold is a contact manifold.
Note that the equation $d\eta = 2\rho \omega$ means that
$ d\eta(X,Y) = 2\rho g(\phi X , Y) $
for all tangent vector fields $X,Y$ on $M$.
Using the definition for the exterior derivative we obtain
$d\eta(X,Y) = d(\eta(Y))(X) - d(\eta(X))(Y) - \eta([X,Y]) 
= g(Y,\nabla_X\xi) - g(X,\nabla_Y\xi) 
= g(Y,\phi S X) - g(X, \phi S Y) 
\break =  g((S\phi + \phi S)X,Y)$.
Thus we have proved:

\begin{prop}\label{char1}
Let $M$ be a connected orientable real hypersurface of a K\"{a}hler manifold $\bar{M}$. Then $M$ is a contact hypersurface if and only if there exists an everywhere nonzero smooth function $\rho$ on $M$ such that
\begin{equation}\label{char2}
S\phi + \phi S = 2\rho \phi.
\end{equation}
\end{prop}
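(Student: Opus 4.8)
The plan is to convert the differential-form identity $d\eta = 2\rho\omega$ into a pointwise tensorial identity on the shape operator by directly expanding $d\eta$. First I would write the contact condition in evaluated form, namely $d\eta(X,Y) = 2\rho\, g(\phi X, Y)$ for all tangent vector fields $X, Y$, so that both sides become bilinear expressions to be compared.

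Next I would compute the left-hand side from the coordinate-free formula $d\eta(X,Y) = X(\eta(Y)) - Y(\eta(X)) - \eta([X,Y])$. Replacing $\eta(\cdot)$ by $g(\cdot,\xi)$ and using that $\nabla$ is metric and torsion-free, the terms involving $\nabla_X Y$ and $\nabla_Y X$ cancel against the bracket term, leaving $d\eta(X,Y) = g(Y,\nabla_X\xi) - g(X,\nabla_Y\xi)$. At this point I would invoke the structure identity $\nabla_X\xi = \phi S X$ established earlier in the section to obtain $d\eta(X,Y) = g(Y,\phi S X) - g(X,\phi S Y)$.

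The one genuinely structural ingredient is the skew-symmetry of $\phi$ with respect to $g$, that is, $g(\phi A,B) = -g(A,\phi B)$, which follows from $\phi X = JX - \eta(X)N$ together with the skew-symmetry of $J$. Applying this, along with the self-adjointness of $S$, to the second term converts $-g(X,\phi S Y)$ into $g(S\phi X,Y)$, while the first term is already $g(\phi S X,Y)$; combining them gives $d\eta(X,Y) = g((S\phi + \phi S)X,Y)$.

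Finally I would compare this with the evaluated contact condition: $M$ is a contact hypersurface precisely when $g((S\phi + \phi S)X,Y) = 2\rho\, g(\phi X,Y)$ holds for all $X,Y$ and for some everywhere nonzero $\rho$. Since $g$ is nondegenerate, this bilinear identity is equivalent to the operator identity $S\phi + \phi S = 2\rho\phi$, which yields both implications of the statement at once. I do not anticipate a serious obstacle; the only points demanding care are the sign bookkeeping in the cancellation of the bracket term and the correct interplay between the skew-symmetry of $\phi$ and the self-adjointness of $S$.
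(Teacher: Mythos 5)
Your proposal is correct and follows essentially the same route as the paper: expand $d\eta(X,Y)$ via the coordinate-free formula, substitute $\nabla_X\xi = \phi SX$, and use the skew-symmetry of $\phi$ together with the symmetry of $S$ to arrive at $d\eta(X,Y) = g((S\phi+\phi S)X,Y)$, from which the equivalence follows by nondegeneracy of $g$. No gaps.
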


A real hypersurface $M$ of a K\"{a}hler manifold is called a Hopf hypersurface if the flow of the Reeb vector field is geodesic, that is, if every integral curve of $\xi$ is a geodesic in $M$. This condition is equivalent to $0 = \nabla_\xi\xi = \phi S \xi$.
Since the kernel of $\phi$ is ${\mathbb R}\xi$ this is equivalent to
$S\xi = \alpha\xi $
with the smooth function $\alpha = g(S\xi,\xi)$. Since $\phi\xi = 0$, equation (\ref{char2}) implies $\phi S \xi = 0$ on a contact hypersurface, which shows that every contact hypersurface is a Hopf hypersurface. Let $X \in {\mathcal C}$ be a principal curvature vector of $M$ with corresponding principal curvature $\lambda$. Then equation (\ref{char2}) implies 
$S \phi X = (2\rho - \lambda)\phi X $,
that is, $\phi X$ is a principal curvature vector of $M$ with corresponding principal curvature $2\rho - \lambda$. Thus the mean curvature of $M$ can be calculated from $\alpha$ and $\rho$ from the equation
$ {\rm tr}(S) = \alpha + 2(n-1)\rho $.
We summarize this in:

\begin{prop}\label{Okumura}  Let $M$ be a contact hypersurface of a K\"{a}hler manifold $\bar{M}$. Then the following statements hold:
\begin{itemize}
\item[(i)] $M$ is a Hopf hypersurface, that is, $S\xi = \alpha \xi$.
\item[(ii)] If $X \in {\mathcal C}$ is a principal curvature vector of $M$ with corresponding principal curvature $\lambda$, then $JX = \phi X \in {\mathcal C}$ is a principal curvature vector of $M$ with corresponding principal curvature $2\rho - \lambda$.
\item[(iii)] The mean curvature of $M$ is given by
\begin{equation}\label{Aalpharho}
{\rm tr}(S) = \alpha + 2(n-1)\rho .
\end{equation}
\end{itemize}
\end{prop}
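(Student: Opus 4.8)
The plan is to derive all three statements directly from the contact characterization (\ref{char2}) in Proposition~\ref{char1}, combined with the elementary identities of the induced almost contact metric structure. Specifically I will use that $\phi\xi = 0$ (which follows from $\xi = -JN$ and $\eta(\xi)=1$), that $\ker\phi = \mathbb{R}\xi$, that $\phi$ is skew-symmetric with respect to $g$ (a consequence of $\omega$ being a $2$-form), and that $\phi^2 X = -X + \eta(X)\xi$ for all $X \in TM$. Throughout, $\rho$ denotes the everywhere nonzero function furnished by the contact condition.

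For (i), I would evaluate (\ref{char2}) on the Reeb vector field $\xi$. Since $\phi\xi = 0$, the terms $S\phi\xi$ and $2\rho\phi\xi$ vanish, leaving $\phi S\xi = 0$. As the kernel of $\phi$ is exactly $\mathbb{R}\xi$, this forces $S\xi \in \mathbb{R}\xi$, say $S\xi = \alpha\xi$; pairing with $\xi$ identifies $\alpha = g(S\xi,\xi)$. This is precisely the Hopf condition. For (ii), I would first check that $\phi$ preserves $\mathcal{C} = \ker\eta$: for $X \in \mathcal{C}$ skew-symmetry gives $g(\phi X,\xi) = -g(X,\phi\xi) = 0$, so $\phi X \in \mathcal{C}$, and since $\eta(X)=0$ we have $\phi X = JX$. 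Applying (\ref{char2}) to a principal curvature vector $X \in \mathcal{C}$ with $SX = \lambda X$ then yields $S\phi X + \lambda\phi X = 2\rho\phi X$, hence $S\phi X = (2\rho-\lambda)\phi X$, as claimed.

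For (iii), the key preliminary observation is that $S$ leaves $\mathcal{C}$ invariant: by (i) and symmetry of $S$ we have $g(SX,\xi) = g(X,S\xi) = \alpha\eta(X) = 0$ for $X \in \mathcal{C}$. Thus $\mathrm{tr}(S) = \alpha + \mathrm{tr}(S|_\mathcal{C})$, and it remains to show $\mathrm{tr}(S|_\mathcal{C}) = 2(n-1)\rho$. I would construct an orthonormal eigenbasis of $\mathcal{C}$ adapted to $\phi$: starting from a unit eigenvector $X_1$ of the symmetric operator $S|_\mathcal{C}$, part (ii) shows that $\phi X_1$ is again a unit eigenvector (here one uses that $\phi$ is an isometry on $\mathcal{C}$, since $g(\phi X,\phi Y) = -g(X,\phi^2 Y) = g(X,Y)$), and $\phi X_1 \perp X_1$ because $g(\phi X_1, X_1) = \omega(X_1,X_1) = 0$. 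The orthogonal complement of $\mathrm{span}\{X_1,\phi X_1\}$ in $\mathcal{C}$ is both $S$-invariant and $\phi$-invariant, so inductively one obtains $n-1$ pairs $\{X_i, \phi X_i\}$ with eigenvalues $\lambda_i$ and $2\rho - \lambda_i$. Since each pair contributes $2\rho$ to the trace, this gives $\mathrm{tr}(S|_\mathcal{C}) = 2(n-1)\rho$ and hence (\ref{Aalpharho}).

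The computations are short and essentially algebraic; the only step demanding a little care is the construction of the $\phi$-adapted orthonormal eigenbasis in (iii), where one must verify simultaneously at each stage that both the $S$-invariance and the $\phi$-invariance descend to the orthogonal complement. Alternatively, one can bypass the explicit basis by restricting the identity $(S\phi+\phi S)\phi = 2\rho\phi$ to $\mathcal{C}$ and taking traces, using $\mathrm{tr}(S\phi^2) = \mathrm{tr}(\phi S\phi)$ together with $\phi^2|_\mathcal{C} = -\mathrm{id}$ and $\mathrm{tr}(\phi^2|_\mathcal{C}) = -2(n-1)$.
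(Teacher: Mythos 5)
Your proposal is correct and follows essentially the same route as the paper, which derives all three parts directly from the identity $S\phi+\phi S=2\rho\phi$ of Proposition \ref{char1} exactly as you do (the paper's "proof" is the short discussion immediately preceding the proposition). Your only addition is to spell out the construction of the $\phi$-adapted eigenbasis for the trace computation, which the paper leaves implicit.
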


For $n = 2$ this gives a simple characterization of contact hypersurfaces:

\begin{prop}\label{dimtwo}
Let $M$ be a connected orientable real hypersurface of a $2$-dimensional K\"{a}hler manifold $\bar{M}^2$. Then $M$ is a contact hypersurface if and only if $M$ is a Hopf hypersurface and ${\rm tr}(S) \neq \alpha$ everywhere.
\end{prop}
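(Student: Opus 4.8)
The plan is to prove both implications by reducing the contact condition (\ref{char2}) to a statement about the restriction of the shape operator to the contact distribution ${\mathcal C} = \ker(\eta)$, which in the case $n = 2$ is only two-dimensional; this is precisely where the dimension hypothesis does all the work. The forward implication requires essentially no new work: if $M$ is a contact hypersurface, then Proposition \ref{Okumura}(i) already gives that $M$ is Hopf with $S\xi = \alpha\xi$, while Proposition \ref{Okumura}(iii) gives ${\rm tr}(S) = \alpha + 2(n-1)\rho = \alpha + 2\rho$. Since $\rho$ is everywhere nonzero by the definition of a contact hypersurface, this forces ${\rm tr}(S) - \alpha = 2\rho \neq 0$ at every point, which is exactly the asserted trace condition.

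For the converse, suppose $M$ is Hopf and ${\rm tr}(S) \neq \alpha$ everywhere. First I would record that $S\xi = \alpha\xi$ together with the symmetry of $S$ makes ${\mathcal C} = \xi^\perp$ invariant under $S$: for $X \in {\mathcal C}$ one has $g(SX,\xi) = g(X,S\xi) = \alpha\, g(X,\xi) = 0$. On ${\mathcal C}$ the tensor $\phi$ restricts to a skew-symmetric endomorphism (because $\omega$ is a $2$-form) with $\phi^2 = -{\rm id}$ (because $\eta$ vanishes on ${\mathcal C}$), that is, to an orthogonal complex structure; and since $\dim_{\mathbb R}{\mathcal C} = 2(n-1) = 2$ this is simply a rotation by a right angle.

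The crux is the following elementary linear-algebra fact, valid precisely in real dimension two: if $A$ is a symmetric endomorphism of a two-dimensional inner product space and $J$ is an orthogonal complex structure on it, then $AJ + JA = ({\rm tr}\,A)\,J$. This is checked at once in an orthonormal basis in which $J$ is the standard rotation, where $A$ is an arbitrary symmetric $2 \times 2$ matrix; the off-diagonal entries cancel and the diagonal entries combine into the trace. Applying this with $A = S|_{\mathcal C}$ and $J = \phi|_{\mathcal C}$ yields $S\phi + \phi S = ({\rm tr}(S|_{\mathcal C}))\,\phi$ on ${\mathcal C}$. Since ${\rm tr}(S|_{\mathcal C}) = {\rm tr}(S) - \alpha$, and since the identity $S\phi + \phi S = 2\rho\phi$ holds trivially on ${\mathbb R}\xi$ (both sides annihilate $\xi$, as $\phi\xi = 0$ and $S\xi = \alpha\xi$), I would set
\[
\rho = \frac{1}{2}\big({\rm tr}(S) - \alpha\big),
\]
which is a smooth function, everywhere nonzero by hypothesis, and which satisfies (\ref{char2}) on all of $TM$. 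Proposition \ref{char1} then identifies $M$ as a contact hypersurface.

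The only genuine content is the two-dimensional linear-algebra identity, and I expect no real obstacle beyond verifying that $S$ preserves ${\mathcal C}$ (which is exactly the Hopf condition) and that the $\rho$ so defined is nowhere zero (which is exactly the trace condition). The point worth emphasizing in the writeup is that this identity fails for higher rank, so that for $n \geq 3$ being Hopf with ${\rm tr}(S) \neq \alpha$ is strictly weaker than being contact; the simplicity of Proposition \ref{dimtwo} is genuinely a low-dimensional phenomenon.
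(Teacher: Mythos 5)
Your proof is correct and follows essentially the same route as the paper: the forward direction is read off from Proposition \ref{Okumura}, and the converse rests on the observation that on the two-dimensional $S$-invariant bundle ${\mathcal C}$ one automatically has $S\phi + \phi S = ({\rm tr}(S)-\alpha)\phi$. The only cosmetic difference is that you verify this via the basis-free identity $AJ+JA=({\rm tr}\,A)J$ for symmetric $A$ on a two-dimensional space, whereas the paper diagonalizes $S|_{\mathcal C}$ pointwise; both hinge on exactly the same low-dimensional fact.
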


\begin{proof}
The "only if" part has been proved in Proposition \ref{Okumura}. Assume that $M$ is a Hopf hypersurface. Then we have $S\xi = \alpha\xi$ and the maximal complex subbundle ${\mathcal C}$ of $TM$ is invariant under the shape operator $S$ of $M$. Let $X \in {\mathcal C}$ be a principal curvature vector with corresponding principal curvature $\lambda$. Since the rank of ${\mathcal C}$ is equal to $2$ the vector $JX = \phi X$ must be a principal curvature vector of $M$. Denote by $\mu$ the corresponding principal curvature. Then we have
$S\phi X + \phi S X = (\mu + \lambda)\phi X $
and 
$ S\phi (\phi X )+ \phi S (\phi X) = -(\lambda + \mu)X = (\lambda + \mu)\phi (\phi X)$.
This shows that  the equation
$ S\phi + \phi S = 2\rho \phi $
holds with $2\rho = (\lambda + \mu) = {\rm tr}(S) - \alpha$. It follows from Proposition \ref{char1} that $M$ is a contact hypersurface precisely if ${\rm tr}(S) \neq \alpha$.
\end{proof}

The previous result implies that there is a significant difference between the cases $n = 2$ and $n > 2$. For example, using Proposition \ref{dimtwo} we can construct many examples of locally inhomogeneous contact hypersurfaces in the complex projective plane ${\mathbb C}P^2$. In contrast, as was shown by Okumura (\cite{O66}), every contact hypersurface in the complex projective space ${\mathbb C}P^n$ of dimension $n > 2$ is an open part of a homogeneous hypersurface. Consider ${\mathbb C}P^2$ being endowed with the standard K\"{a}hler metric of constant holomorphic sectional curvature $4$. Let $C$ be a complex curve in ${\mathbb C}P^2$. Then, at least locally and for small radii, the tubes around $C$ are well-defined real hypersurfaces of ${\mathbb C}P^2$. All these real hypersurfaces are Hopf hypersurfaces with $\alpha = 2\cot(2r)$ where $r$ is the radius, and generically their mean curvature is different from $2\cot(2r)$.
For this reason we focus here on the case $n > 2$. 

\begin{prop}\label{rhoconstant}
Let $M$ be a connected real hypersurface of an $n$-dimensional K\"{a}hler manifold $\bar{M}^n$, $n > 2$, and assume that there exists an everywhere nonzero smooth function $\rho$ on $M$ such that $d\eta = 2\rho \omega$. Then $\rho$ is constant.
\end{prop}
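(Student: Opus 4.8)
The plan is to bypass any differentiation of the shape operator and instead exploit the closedness of the forms involved. The key starting observation is that the hypothesis $d\eta = 2\rho\omega$ forces
$0 = d(d\eta) = 2\,d(\rho\omega) = 2\bigl(d\rho\wedge\omega + \rho\,d\omega\bigr)$.
Since Proposition \ref{closed} gives $d\omega = 0$, this collapses to the single identity $d\rho\wedge\omega = 0$ on all of $M$. Everything then reduces to deciding which $1$-forms are annihilated by wedging with $\omega$, and it is precisely here that the assumption $n > 2$ will be decisive.

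First I would extract the Reeb-direction derivative. Evaluating the $3$-form $d\rho\wedge\omega$ on a triple $(\xi,X,Y)$ with $X,Y \in \mathcal{C} = \ker\eta$ and using $\omega(\xi,\cdot) = g(\phi\xi,\cdot) = 0$ leaves only the term $(\xi\rho)\,\omega(X,Y)$. Choosing $Y = \phi X$ gives $\omega(X,\phi X) = \|X\|^2$, so $\xi\rho = 0$; note that this step does not even use $\rho \neq 0$.

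The main work is to show that $d\rho$ vanishes on $\mathcal{C}$ as well. Writing $V$ for the $\mathcal{C}$-component of $\mathrm{grad}\,\rho$, so that $d\rho(X) = g(V,X)$ for $X \in \mathcal{C}$, the horizontal evaluation of $d\rho\wedge\omega = 0$ on $X,Y,Z \in \mathcal{C}$ reads $g(V,X)g(\phi Y,Z) - g(V,Y)g(\phi X,Z) + g(V,Z)g(\phi X,Y) = 0$. Setting $X = V$ and choosing $Y,Z$ in the $\phi$-invariant subspace $\mathcal{D} = \{V,\phi V\}^{\perp}\cap\mathcal{C}$ kills the last two terms and yields $\|V\|^2\,g(\phi Y,Z) = 0$; taking $Z = \phi Y$ with $0 \neq Y \in \mathcal{D}$ gives $g(\phi Y,\phi Y) = \|Y\|^2 \neq 0$ and forces $V = 0$. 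The only place where $n > 2$ enters is the guarantee that such a nonzero $Y$ exists: since $\dim\mathcal{D} = (2n-2) - 2 = 2n - 4 \geq 2$, the subspace $\mathcal{D}$ is nontrivial. This is exactly the Lefschetz-type injectivity of $\beta \mapsto \beta\wedge\omega$ on $1$-forms over a symplectic space of rank $\geq 4$, and its failure when $n = 2$ is precisely what leaves room for the inhomogeneous examples of Proposition \ref{dimtwo}.

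Combining the two steps gives $d\rho = 0$, so $\rho$ is locally constant and, $M$ being connected, constant. I expect the horizontal step to be the main obstacle: one must produce a test vector inside $\mathcal{C}$ on which $\omega$ remains nondegenerate after imposing the orthogonality constraints, and it is exactly the dimension count $2n - 4 \geq 2$ that makes this possible.
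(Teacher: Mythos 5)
Your argument is correct and follows essentially the same route as the paper: both reduce the statement to $d\rho\wedge\omega=0$ via $d^2\eta=0$ and Proposition \ref{closed}, then evaluate the resulting cyclic identity on suitable triples, using $n>2$ only to produce a vector of $\mathcal{C}$ orthogonal to a given complex line. Your choice of test vectors (setting $X=V$ and testing against $\{V,\phi V\}^{\perp}\cap\mathcal{C}$) is a cosmetic variant of the paper's choice of $Y\perp X,\phi X$ and $Z=\phi Y$.
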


\begin{proof}
Taking the exterior derivative of the equation  $d\eta = 2\rho \omega$ and using the fact that $\omega$ is closed gives $0 = d^2\eta = 2d\rho \wedge \omega$,
or equivalently,
\begin{equation}\label{cyclicdrho}
0 = d\rho(X)g(\phi Y,Z) + d\rho(Y)g(\phi Z,X) + d\rho(Z)g(\phi X,Y).
\end{equation}
For $X = \xi$, $Y \in {\mathcal C}$ with $||Y|| = 1$, and $Z = \phi Y$, this implies
$d\rho(\xi) = 0$.
Let $X \in {\mathcal C}$. Since $n > 2$ we can choose a unit vector $Y \in {\mathcal C}$ which is perpendicular to both $X$ and $\phi X$. Inserting $X,Y$ and $Z = \phi Y$ into equation (\ref{cyclicdrho}) gives
$d\rho(X) = 0$.
Altogether, since $M$ is connected, this implies that $\rho$ is constant.
\end{proof}

We denote by $\bar{R}$ the Riemannian curvature tensor of $\bar{M}$. For $p \in M$ and $Z \in T_p\bar{M}$ we denote by $Z_{\mathcal C}$ the orthogonal projection of $Z$ onto ${\mathcal C}$.

\begin{prop}\label{Asquared}
Let $M$ be a contact hypersurface of a K\"{a}hler manifold $\bar{M}$. Then we have
$2(S^2  - 2\rho S + \alpha\rho) X 
= (\bar{R}(JN,N)JX)_{\mathcal C}$
for all $X \in {\mathcal C}$.
In particular, for all $X \in {\mathcal C}$ with $SX = \lambda X$ we have
$
2(\lambda^2  - 2\rho \lambda + \alpha\rho )X = (\bar{R}(JN,N)J X)_{\mathcal C} $.
\end{prop}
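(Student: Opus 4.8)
The engine of the proof is the Codazzi equation. For a hypersurface with shape operator $SX = -\bar\nabla_X N$ it reads $\bar{R}(X,Y)N = (\nabla_Y S)X - (\nabla_X S)Y$; one must be careful with the sign forced by this convention, and one checks that $\bar R(X,Y)N$ is automatically tangential to $M$. Since a contact hypersurface is Hopf by Proposition \ref{Okumura}, we have $S\xi = \alpha\xi$, and using $\nabla_X\xi = \phi SX$ I would compute $(\nabla_X S)\xi = (X\alpha)\xi + \alpha\phi SX - S\phi SX$. The aim is to feed $Y=\xi$ into Codazzi and so extract the curvature term $\bar R(JN,N)JX = -\bar R(\xi,N)\phi X$.

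To eliminate the unwanted term $(\nabla_\xi S)X$, I would pair the Codazzi identity (with $Y=\xi$) against a tangent vector and antisymmetrize: writing the relation once with the pair $(X,\xi)$ tested against $Y$ and once with $(Y,\xi)$ tested against $X$, and subtracting, the terms $g((\nabla_\xi S)X,Y)$ and $g((\nabla_\xi S)Y,X)$ cancel because $\nabla_\xi S$ is self-adjoint. Restricting to $X,Y\in{\mathcal C}$ also kills the $(X\alpha)\xi$ contributions, leaving $g((\nabla_X S)\xi,Y) - g((\nabla_Y S)\xi,X)$ on the left and $\bar R(Y,\xi,N,X) - \bar R(X,\xi,N,Y)$ on the right.

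The left-hand side is then simplified purely algebraically with the contact relation (\ref{char2}). Using $\phi S + S\phi = 2\rho\phi$ one gets $S\phi S = -\phi(S^2 - 2\rho S)$, and since $\phi$ and $S\phi S$ are skew while $S$ is self-adjoint, the two difference terms collapse to $2\alpha\rho\, g(\phi X,Y) - 2 g(S\phi S X,Y) = g\big(\phi\cdot 2(S^2 - 2\rho S + \alpha\rho)X,\,Y\big)$. Thus, writing $T := 2(S^2 - 2\rho S + \alpha\rho)$ (which preserves ${\mathcal C}$, as $\eta(TX)=0$ for $X\in{\mathcal C}$), I obtain $g(\phi TX,Y) = \bar R(Y,\xi,N,X) - \bar R(X,\xi,N,Y)$ for all $X,Y\in{\mathcal C}$.

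It remains to recognize the right-hand side. The first Bianchi identity together with the antisymmetry of $\bar R$ in each pair turns $\bar R(X,\xi,N,Y) - \bar R(Y,\xi,N,X)$ into $-\bar R(\xi,N,X,Y)$, so that $g(\phi TX,Y) = \bar R(\xi,N,X,Y)$. I would then convert this into a statement about $TX$ rather than $\phi TX$ via $g(\phi TX,Y) = -g(TX,\phi Y)$, using the K\"ahler identities $\bar R(\cdot,\cdot,JW,JZ) = \bar R(\cdot,\cdot,W,Z)$ and $\bar R(\cdot,\cdot,JW,Z) = -\bar R(\cdot,\cdot,W,JZ)$ together with $JN = -\xi$, $J\xi = N$ and $JX = \phi X$ on ${\mathcal C}$; this identifies $g(TX,Z)$ with $-\bar R(\xi,N,\phi X,Z) = g(\bar R(JN,N)JX,Z)$ for every $Z\in{\mathcal C}$. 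Since both $TX$ and $(\bar R(JN,N)JX)_{\mathcal C}$ lie in ${\mathcal C}$, the vector identity follows, and specializing to $SX = \lambda X$ yields the final display. I expect the main obstacle to be exactly this curvature bookkeeping: matching the antisymmetrized $\bar R$ with $(\bar R(JN,N)JX)_{\mathcal C}$ forces one to combine the first Bianchi identity with the K\"ahler $J$-identities and the $J$-relations among $\xi$ and $N$, and a single sign slip — most dangerously in the Codazzi convention — produces the wrong overall sign.
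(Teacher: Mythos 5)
Your proposal is correct and follows essentially the same route as the paper: the Codazzi equation evaluated against $\xi$, the Hopf condition $S\xi=\alpha\xi$, and the contact relation $S\phi+\phi S=2\rho\phi$ to produce $g(\phi TX,Y)=\bar R(\xi,N,X,Y)$, followed by the curvature and K\"ahler symmetries to identify the right-hand side with $(\bar R(JN,N)JX)_{\mathcal C}$. The only (harmless) difference is that you restrict to $X,Y\in{\mathcal C}$ from the outset, which kills the $d\alpha$ terms immediately, whereas the paper keeps general $X,Y$ and first extracts the auxiliary formula (\ref{dalpha}) for $d\alpha$, which it reuses in Proposition \ref{alphaconstant}.
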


\begin{proof}\label{Proof}
Since $M$ is a contact hypersurface, we know from Proposition \ref{Okumura} that $S\xi = \alpha \xi$.
Using the Codazzi equation and Proposition \ref{char1} we get for arbitrary tangent vector fields $X$ and $Y$ that
\begin{eqnarray*}
g(\bar{R}(X,Y)\xi,N) & = &   g((\nabla_XS)Y - (\nabla_YS)X, \xi) 
\ = \  g((\nabla_XS)\xi,Y) - g((\nabla_YS)\xi,X) \\
&=& d\alpha(X)\eta(Y) - d\alpha(Y)\eta(X) + \alpha g((S\phi + \phi
S)X,Y) - 2g(S\phi SX,Y) \\
&=& d\alpha(X)\eta(Y) - d\alpha(Y)\eta(X) + 2g((S^2  - 2\rho S + \alpha\rho)\phi X,Y).
\end{eqnarray*}
For $X = \xi$ this equation yields
$d\alpha(Y) = d\alpha(\xi)\eta(Y) + g(\bar{R}(Y,\xi)\xi ,N)$.
Since $\bar{M}$ is a K\"{a}hler manifold we have
$g(\bar{R}(Y,\xi)\xi ,N) = g(\bar{R}(JY,J\xi)\xi ,N) = g(\bar{R}(JY,N)\xi ,N) 
= g(J\bar{R}(\xi,N)N,Y)$,
and therefore
\begin{equation}\label{dalpha}
d\alpha(Y) = d\alpha(\xi)\eta(Y) + g(J\bar{R}(\xi,N)N,Y).
\end{equation}
Inserting this and the corresponding equation for $d\alpha(X)$ into the previous equation gives
$0 = 2g((S^2  - 2\rho S + \alpha\rho)\phi X,Y) - g(\bar{R}(X,Y)\xi,N) 
 - \eta(X)g(J\bar{R}(\xi,N)N,Y) + \break \eta(Y)g(J\bar{R}(\xi,N)N,X)$.
Choosing $X \in {\mathcal C}$, replacing $X$ by ${\phi}X$, and using some standard curvature identities then leads to 
the equation in Proposition \ref{Asquared}.
\end{proof}

\begin{prop}\label{alphaconstant}
Let $M$ be a contact hypersurface of a K\"{a}hler manifold $\bar{M}$. Then $\alpha$ is constant if and only if $JN$ is an eigenvector of the normal Jacobi operator $\bar{R}_N = \bar{R}(\cdot,N)N$ everywhere.
\end{prop}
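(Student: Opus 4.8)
The plan is to read everything off the differential identity (\ref{dalpha}) for $\alpha$ established in the proof of Proposition \ref{Asquared}, namely $d\alpha(Y) = d\alpha(\xi)\eta(Y) + g(J\bar{R}(\xi,N)N,Y)$, and to translate its curvature term into a statement about $\bar{R}_N$. Since $\xi=-JN$, one has $\bar{R}(\xi,N)N = -\bar{R}(JN,N)N = -\bar{R}_N(JN)$, so the source term is governed by $W := J\bar{R}(\xi,N)N = -J\bar{R}_N(JN)$. A one-line computation using that $J$ is skew-symmetric and $J\xi=N$ shows $g(W,\xi)=0$, so $W$ lies in $\mathcal{C}\oplus\mathbb{R}N$ and its only relevant freedom is the $\mathcal{C}$-component.

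The key algebraic step I would isolate is the equivalence: $W$ is a multiple of $N$ (equivalently its $\mathcal{C}$-component vanishes) if and only if $JN$ is an eigenvector of $\bar{R}_N$. This is immediate from $W=-J\bar{R}_N(JN)$, since applying $J$ to $W=cN$ and using $J^2=-\mathrm{id}$ gives $\bar{R}_N(JN)=cJN$, and conversely. Granting this, the forward implication is essentially free: if $\alpha$ is constant then $d\alpha=0$, so (\ref{dalpha}) forces $g(W,Y)=0$ for all tangent $Y$; combined with $g(W,\xi)=0$ this yields $W\parallel N$, whence $JN$ is an eigenvector of $\bar{R}_N$.

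The converse is where the real work lies, and I expect it to be the main obstacle. Assuming $JN$ is an eigenvector, the equivalence gives $W=cN$, and then (\ref{dalpha}) collapses to $\nabla\alpha = f\xi$ with $f=\xi(\alpha)$. This only says $\alpha$ is constant along $\mathcal{C}$; the difficulty is to rule out variation of $\alpha$ along the Reeb flow, i.e.\ to prove $f=0$. My plan is to use the symmetry of the Hessian of $\alpha$: differentiating $\nabla\alpha=f\xi$ with $\nabla_X\xi=\phi SX$ gives $\nabla_X\nabla\alpha=(Xf)\xi+f\phi SX$, and symmetrizing in $X,Y$ produces
\[ (Xf)\eta(Y)-(Yf)\eta(X)+f\bigl(g(\phi SX,Y)-g(\phi SY,X)\bigr)=0. \]

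The contact hypothesis then enters decisively. A short transpose computation, using that $\phi$ is skew and $S$ is symmetric, rewrites $g(\phi SX,Y)-g(\phi SY,X)$ as $g((S\phi+\phi S)X,Y)$, which by the contact identity (\ref{char2}) equals $2\rho\,g(\phi X,Y)$. Specializing to a unit vector $X\in\mathcal{C}$ and $Y=\phi X$ annihilates the $\eta$-terms and leaves $2\rho f=0$; since $\rho$ is nowhere zero by hypothesis, $f=0$, so $\nabla\alpha=0$ and $\alpha$ is constant because $M$ is connected. The subtle point throughout is recognizing that the contact condition is exactly what makes the antisymmetric part of $f\phi S$ proportional to the fundamental form $\omega$, so that testing against the single pair $(X,\phi X)$ is enough to force $f=0$.
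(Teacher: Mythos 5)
Your proposal is correct and follows essentially the same route as the paper: both directions rest on identity (\ref{dalpha}), the easy direction reads off $\bar{R}(JN,N)N\in\mathbb{R}JN$ from $d\alpha=0$, and the harder direction uses the symmetry of the Hessian of $\alpha$ together with the contact identity $S\phi+\phi S=2\rho\phi$ and $\rho\neq 0$ to kill $d\alpha(\xi)$. The only cosmetic difference is that you test the symmetrized Hessian against the single pair $(X,\phi X)$ rather than against general $X,Y\in\mathcal{C}$, which is exactly the same computation.
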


\begin{proof}
First assume that $JN$ is an eigenvector of the normal Jacobi operator $\bar{R}_N = \bar{R}(\cdot,N)N$ everywhere. From equation (\ref{dalpha}) we get
$d\alpha(Y)  =  d\alpha(\xi)\eta(Y)$
for all $Y \in TM$.
Since ${\rm grad}^M \alpha =d\alpha(\xi)\xi$, we can compute the Hessian ${\rm hess}^M \alpha$ by
$
({\rm hess}^M \alpha)(X,Y) = g(\nabla_X{\rm grad}^M \alpha , Y)
= d(d\alpha(\xi))(X)\eta(Y) +d\alpha(\xi) g(\phi SX,Y)
$.
As ${\rm hess}^M \alpha$ is a symmetric bilinear form, the previous equation implies
$
0 = d\alpha(\xi)g((S\phi + \phi S)X,Y) = 2\rho d\alpha(\xi)g(\phi X,Y)
$
for all vector fields $X,Y$ on $M$ which are tangential to ${\mathcal C}$. Since $\rho$ is nonzero everywhere this implies $d\alpha(\xi) = 0$ and hence $\alpha$ is constant.

Conversely, assume that $\alpha$ is constant. From  (\ref{dalpha}) we get
$
0 = g(J\bar{R}(\xi,N)N,Y) = g(\bar{R}(JN,N)N,JY)
$
for all tangent vectors $Y$ of $M$. Since $J(TM) = {\mathcal C} \oplus {\mathbb R}N$ this implies that $\bar{R}(JN,N)N \in {\mathbb R}JN$ everywhere. 
\end{proof}

\begin{prop}\label{traceA2}
Let $M$ be a contact hypersurface of a K\"{a}hler manifold $\bar{M}$. Then we have
\[ ||S||^2 = {\rm tr}(S^2) = \alpha^2 + 2(n-1)\rho(2\rho - \alpha) - g(\overline{Ric}(N),N) + g(\bar{R}(JN,N)N,JN) \ , \]
where $\overline{Ric}$ is the Ricci tensor of $\bar{M}$.
\end{prop}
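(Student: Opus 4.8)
The plan is to compute ${\rm tr}(S^2)$ by splitting $TM$ into the orthogonal sum $\mathbb{R}\xi \oplus \mathcal{C}$ and treating the two pieces separately. Since $M$ is a contact hypersurface, Proposition \ref{Okumura} gives $S\xi = \alpha\xi$, so the $\xi$-direction contributes $g(S^2\xi,\xi) = \alpha^2$. For the restriction to $\mathcal{C}$ I would rewrite the identity of Proposition \ref{Asquared} as
\[
S^2 X = 2\rho SX - \alpha\rho X + \tfrac{1}{2}(\bar{R}(JN,N)JX)_{\mathcal{C}} \qquad (X \in \mathcal{C}),
\]
and take the trace over an orthonormal basis $\{e_1,\dots,e_{2n-2}\}$ of $\mathcal{C}$. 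Here ${\rm tr}_{\mathcal{C}}(S) = {\rm tr}(S) - \alpha = 2(n-1)\rho$ by Proposition \ref{Okumura}(iii), and $\dim\mathcal{C} = 2(n-1)$, so the first two terms combine to $2(n-1)\rho(2\rho - \alpha)$. The orthogonal projection onto $\mathcal{C}$ in the last term may be dropped when pairing against $e_i \in \mathcal{C}$, giving
\[
{\rm tr}(S^2) = \alpha^2 + 2(n-1)\rho(2\rho-\alpha) + \tfrac{1}{2}\sum_{i} g\big(\bar{R}(JN,N)Je_i,e_i\big).
\]

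It then remains to identify the curvature sum with $-g(\overline{Ric}(N),N) + g(\bar{R}(JN,N)N,JN)$. First I would expand the Ricci term in the adapted orthonormal basis $\{N,\xi,e_1,\dots,e_{2n-2}\}$ of $T_p\bar{M}$, where $\xi = -JN$; separating off the $\xi$-term yields
\[
g(\overline{Ric}(N),N) = g(\bar{R}(JN,N)N,JN) + \sum_i g(\bar{R}(e_i,N)N,e_i),
\]
so that the desired right-hand side is exactly $-\sum_i g(\bar{R}(e_i,N)N,e_i)$. The core of the argument is to show that $\tfrac{1}{2}\sum_i g(\bar{R}(JN,N)Je_i,e_i)$ equals this same quantity. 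For this I would apply the first Bianchi identity to $\bar{R}(JN,N)Je_i$ and then use the two Kähler curvature identities $\bar{R}(JX,JY) = \bar{R}(X,Y)$ and $J\bar{R}(X,Y) = \bar{R}(X,Y)J$ together with the skew-symmetry of $J$; since $\{Je_i\}$ is again an orthonormal basis of the $J$-invariant space $\mathcal{C}$, both resulting terms reduce to $-\sum_i g(\bar{R}(e_i,N)N,e_i)$, producing the factor $2$ that cancels the $\tfrac{1}{2}$. Substituting back gives the claimed formula.

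I expect the curvature bookkeeping in this last step to be the only genuine obstacle: one must track carefully which projections onto $\mathcal{C}$ can be discarded, and invoke the Kähler identities in precisely the right combination so that the Bianchi expansion collapses onto the Ricci component of $N$. Everything else is a routine trace computation resting directly on Propositions \ref{Okumura} and \ref{Asquared}.
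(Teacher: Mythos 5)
Your proposal is correct and follows the same skeleton as the paper's proof: split off the $\xi$-direction (contributing $\alpha^2$), take the trace of the identity in Proposition \ref{Asquared} over $\mathcal{C}$ (contributing $2(n-1)\rho(2\rho-\alpha)$ via Proposition \ref{Okumura}(iii)), and then identify the residual curvature sum $\tfrac{1}{2}\sum_i g(\bar{R}(JN,N)Je_i,e_i)$ with the Ricci terms. The only place you diverge is in that last identification. The paper converts $g(\bar{R}(JN,N)Je_\nu,E_\nu)$ into $-g(\bar{R}(E_\nu,JE_\nu)JN,N)$ by the pair symmetry of $\bar{R}$ and then invokes the K\"{a}hler expression $\overline{Ric}(X)=\sum_\nu \bar{R}(E_{2\nu-1},JE_{2\nu-1})JX$ (Proposition 4.57 in \cite{B06}), reading off the answer after separating the $\xi$- and $N$-contributions. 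You instead use the ordinary definition of $\overline{Ric}(N)$ as the trace of the normal Jacobi operator and close the gap with the first Bianchi identity plus the K\"{a}hler identities $\bar{R}(JX,JY)=\bar{R}(X,Y)$ and $J\bar{R}(X,Y)=\bar{R}(X,Y)J$; I checked that the two Bianchi terms do each reduce to $-\sum_i g(\bar{R}(e_i,N)N,e_i)$ (the second after reindexing the basis of $\mathcal{C}$ by $e_i\mapsto Je_i$), so the factor $2$ cancels the $\tfrac12$ as you claim. Your route is marginally more self-contained, since it derives the needed identity from Bianchi rather than citing the holomorphic formula for the K\"{a}hler Ricci tensor; the paper's route is shorter once that formula is granted. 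Either way the bookkeeping is sound and the proof is complete once the Bianchi step is written out.
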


\begin{proof}
We choose a local orthonormal frame field of $\bar{M}$ along $M$ of the form 
$E_1,E_2 = J E_1,\ldots,E_{2n-3},E_{2n-2} = J E_{2n-3},E_{2n-1} = \xi,E_{2n} = J\xi = N$.
Since $\bar{M}$ is K\"{a}hler, its Ricci tensor $\overline{Ric}$ can by calculated by 
\[ \overline{Ric}(X) = \sum_{\nu=1}^{n} \bar{R}(E_{2\nu-1},JE_{2\nu-1})JX 
= \sum_{\nu=1}^{n} \bar{R}(E_{2\nu},JE_{2\nu})JX\]
along $M$ (see e.g.\ Proposition 4.57 in \cite{B06}). Using Propositions \ref{Asquared} and  \ref{Okumura} we get
\begin{eqnarray*}
{\rm tr}(S^2) & = & \alpha^2 + \sum_{\nu = 1}^{2n-2}g(S^2E_\nu,E_\nu) \\
& = & \alpha^2 + \sum_{\nu = 1}^{2n-2}\left(2\rho g(SE_\nu,E_\nu) - \alpha\rho g(E_\nu,E_\nu) + \frac{1}{2}g(\bar{R}(JN,N)JE_\nu,E_\nu)\right) \\
& = & \alpha^2 + 2\rho({\rm tr}(S) - \alpha) - 2(n-1)\alpha\rho -\frac{1}{2}\sum_{\nu = 1}^{2n-2}g(\bar{R}(E_\nu,JE_\nu)JN,N) \\
& = & \alpha^2 + 4(n-1)\rho^2 - 2(n-1)\alpha\rho - \left(g(\overline{Ric}(N),N) - g(\bar{R}(N,JN)JN,N)\right) \\
& = & \alpha^2 + 2(n-1)\rho(2\rho - \alpha) - g(\overline{Ric}(N),N) + g(\bar{R}(JN,N)N,JN)\ ,
\end{eqnarray*}
which proves the assertion.
\end{proof}

\begin{prop}\label{kminustraceA}
Let $M$ be a contact hypersurface of a K\"{a}hler manifold $\bar{M}^n$, $n > 2$. Then we have
$d({\rm tr}(S))(X)  =   g(\bar{R}(JN,N)N,JX)$
for all $X \in {\mathcal C}$. 
\end{prop}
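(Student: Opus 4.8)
The plan is to reduce the statement to equation (\ref{dalpha}), which already expresses $d\alpha$ in terms of the ambient curvature, and then to convert $d({\rm tr}(S))$ into $d\alpha$. First I would invoke part (iii) of Proposition \ref{Okumura}, namely ${\rm tr}(S) = \alpha + 2(n-1)\rho$ from (\ref{Aalpharho}). Since the hypothesis here is $n > 2$, Proposition \ref{rhoconstant} applies and guarantees that $\rho$ is constant; this is precisely the point at which the dimension restriction enters. Consequently $d(2(n-1)\rho) = 0$ and therefore $d({\rm tr}(S)) = d\alpha$, so it suffices to compute $d\alpha(X)$ for $X \in {\mathcal C}$.

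Next I would specialize equation (\ref{dalpha}), that is $d\alpha(Y) = d\alpha(\xi)\eta(Y) + g(J\bar{R}(\xi,N)N,Y)$, to a vector $X \in {\mathcal C} = {\rm ker}(\eta)$. Because $\eta(X) = 0$, the first term on the right-hand side vanishes and we are left with the intermediate identity $d\alpha(X) = g(J\bar{R}(\xi,N)N,X)$. It then only remains to rewrite this expression in the form claimed in the proposition.

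The final step is a short sign computation using the two basic facts that $\xi = -JN$ and that $J$ is skew-symmetric with respect to $g$ (the Hermitian compatibility $g(JA,B) = -g(A,JB)$ on a K\"ahler manifold). Setting $W = \bar{R}(\xi,N)N = \bar{R}(-JN,N)N = -\bar{R}(JN,N)N$, the skew-symmetry of $J$ gives $g(JW,X) = -g(W,JX)$, and hence
\[
g(J\bar{R}(\xi,N)N,X) = -g(W,JX) = g(\bar{R}(JN,N)N,JX),
\]
which is exactly the asserted formula. I do not expect any genuine obstacle in this argument: the only points demanding care are the appeal to the constancy of $\rho$ (where $n > 2$ is essential, so that $d({\rm tr}(S)) = d\alpha$) and the bookkeeping of the two cancelling sign changes, one from $\xi = -JN$ and one from the skew-symmetry of $J$.
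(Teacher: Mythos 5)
Your argument is correct, but it is genuinely different from --- and considerably shorter than --- the proof in the paper. The paper proves Proposition \ref{kminustraceA} by differentiating the contact condition $S\phi + \phi S = 2\rho\phi$, contracting the resulting identity against a local orthonormal frame, and invoking the Codazzi equation twice together with the expression for the Ricci tensor of a K\"ahler manifold and the formula for ${\rm tr}(S^2)$ from Proposition \ref{traceA2}. You instead observe that ${\rm tr}(S) = \alpha + 2(n-1)\rho$ by (\ref{Aalpharho}), that $\rho$ is constant for $n>2$ by Proposition \ref{rhoconstant}, hence $d({\rm tr}(S)) = d\alpha$, and then simply evaluate the already-established identity (\ref{dalpha}) on $X \in {\mathcal C} = \ker(\eta)$; the sign bookkeeping $g(J\bar{R}(\xi,N)N,X) = g(\bar{R}(JN,N)N,JX)$ via $\xi = -JN$ and the skew-symmetry of $J$ is correct, and there is no circularity since (\ref{dalpha}) is derived in the proof of Proposition \ref{Asquared}, which precedes this statement and does not rely on it. What your route buys is economy and transparency: it makes immediately visible why Propositions \ref{alphaconstant} and \ref{kminustraceAconstant} end up with the identical criterion ($JN$ an eigenvector of $\bar{R}_N$), namely because $d\alpha$ and $d({\rm tr}(S))$ coincide when $n>2$, whereas the paper reaches the two criteria by two independent computations. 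What the paper's longer contraction argument buys is a derivation that does not route through the auxiliary function $\alpha$ and (\ref{dalpha}) at all, which serves as an independent consistency check; but as a proof of the stated proposition your version is complete and preferable.
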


\begin{proof}\label{Proof}
Since $M$ is a contact hypersurface, the equation $\phi S+S \phi=2\rho \phi $ holds, and since $n > 2$, the function $\rho$ is constant. Differentiating this equation leads to
\begin{equation*}
(\nabla_Y\phi)SX+\phi(\nabla_YS)X+(\nabla_YS)\phi X+S(\nabla_Y\phi)X= 2\rho (\nabla_Y\phi)X.
\end{equation*}
Using (\ref{nablaphi}) this implies
\[ 0 = 
\eta(X)(S^2Y+(\alpha-2\rho) SY)-g(S^2X+(\alpha -2\rho) SX,Y)\xi+\phi(\nabla_YS)X+(\nabla_YS)\phi X .
\]
We choose a local orthonormal frame field of $M$ of the form 
$ E_1,E_2 = J E_1,\ldots,E_{2n-3}$, $E_{2n-2} = J E_{2n-3},E_{2n-1} = \xi$.
Contracting the previous equation with respect to this frame field and using the formulas for ${\rm tr}(S)$ and ${\rm tr}(S^2)$ according to Propositions \ref{Okumura} and \ref{traceA2}, respectively, then gives
\[
0 = 
\left( g(\overline{Ric}(N),N) - g(\bar{R}(JN,N)N,JN) \right) \xi 
+\sum_{\nu=1}^{2n-1}\phi(\nabla_{E_{\nu}}S)E_\nu
+\sum_{\nu=1}^{2n-2}(\nabla_{E_{\nu}}S)\phi E_\nu.
\]
Using the  Codazzi equation 
$g((\nabla_XS)Y,Z) - g((\nabla_YS)X,Z) = g(\bar{R}(X,Y)Z,N)$
we get
\begin{eqnarray*}
& & \sum_{\nu=1}^{2n-1}g(\phi(\nabla_{E_{\nu}}S)E_\nu,X) =
- \sum_{\nu=1}^{2n-1}g((\nabla_{E_{\nu}}S)E_\nu,\phi X)
= - \sum_{\nu=1}^{2n-1}g((\nabla_{E_{\nu}}S)\phi X,E_\nu) \\
&  & = - \sum_{\nu=1}^{2n-1}g((\nabla_{\phi X}S)E_\nu,E_\nu) 
-  \sum_{\nu=1}^{2n-1}g(\bar{R}(E_\nu,\phi X)E_\nu,N) \\
& & = - {\rm tr}(\nabla_{\phi X} S)  + g(\phi X, \overline{Ric}(N)) 
 = - d({\rm tr}(S))(\phi X) + g(\phi X, \overline{Ric}(N)).
\end{eqnarray*}
Since $\nabla_XS$ is symmetric and $\phi$ is skewsymmetric, we get
$\sum_{\nu=1}^{2n-2}g((\nabla_XS)E_\nu,\phi E_\nu) = 0$,
and using again the Codazzi equation we obtain
\begin{eqnarray*}
& & \sum_{\nu=1}^{2n-2}g((\nabla_{E_{\nu}}S)\phi E_\nu,X) =
\sum_{\nu=1}^{2n-2}g((\nabla_{E_{\nu}}S)X,\phi E_\nu) \ = \ \sum_{\nu=1}^{2n-2}g(\bar{R}(E_\nu,X)\phi E_\nu, N) \\
& & = \sum_{\nu=1}^{2n-2}g(\bar{R}(E_\nu,X)J E_\nu, N) \ =\  \sum_{\nu=1}^{2n-2}g(J\bar{R}(E_\nu,X)E_\nu, N) 
= \sum_{\nu=1}^{2n-2}g(\bar{R}(E_\nu,X)E_\nu, \xi) \\
& & =  - g(\bar{R}(JN,N)N,X) - g(X,\overline{Ric}(\xi)).
\end{eqnarray*}
Altogether this now implies
\begin{eqnarray*}
0 & = & 
\left( g(\overline{Ric}(N),N) - g(\bar{R}(JN,N)N,JN) \right) \eta(X) \\
& & - d({\rm tr}(S))(\phi X) + g(\phi X, \overline{Ric}(N)) 
 - g(\bar{R}(JN,N)N,X) - g(X,\overline{Ric}(\xi)) .
\end{eqnarray*}
Using the fact that the Ricci tensor $\overline{Ric}$ and the complex structure $J$ of a K\"{a}hler manifold commute one can easily see that 
$\eta(X)g(\overline{Ric}(N),N) + g(\phi X, \overline{Ric}(N))  - g(X,\overline{Ric}(\xi)) = 0 $,
and therefore
$d({\rm tr}(S))(\phi X) = - g(\bar{R}(JN,N)N,X_{\mathcal C}) = - g((\bar{R}(JN,N)N)_{\mathcal C},X)$.
Replacing $X$ by $\phi X = JX$ for $X \in {\mathcal C}$ then leads to the assertion.
\end{proof}

\begin{prop}\label{kminustraceAconstant}
Let $M$ be a contact hypersurface of a K\"{a}hler manifold $\bar{M}^n$, $n > 2$. Then $M$ has constant mean curvature if and only if $JN$ is an eigenvector of the normal Jacobi operator $\bar{R}_N = \bar{R}(\cdot,N)N$ everywhere.
\end{prop}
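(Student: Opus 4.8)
The plan is to reduce the statement to a combination of the results already established, since the genuine analytic work has been done in the preceding propositions. First I would note that, because $n > 2$, Proposition \ref{rhoconstant} guarantees that the function $\rho$ in the contact condition $d\eta = 2\rho\omega$ is constant. By Proposition \ref{Okumura}(iii) the mean curvature of $M$ is a fixed nonzero multiple of ${\rm tr}(S) = \alpha + 2(n-1)\rho$. Since $\rho$ is now a constant, the mean curvature is constant if and only if ${\rm tr}(S)$ is constant, and this holds precisely when the function $\alpha = g(S\xi,\xi)$ is constant. At this point I would simply invoke Proposition \ref{alphaconstant}, which asserts that $\alpha$ is constant if and only if $JN$ is an eigenvector of the normal Jacobi operator $\bar{R}_N$ everywhere. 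Chaining the two equivalences gives the claim.

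It is worth flagging that this chain uses $n > 2$ in an essential way, and not merely for convenience: the constancy of $\rho$ can fail when $n = 2$ (cf.\ Proposition \ref{dimtwo} and the ensuing discussion), and it is exactly this constancy that lets one trade constancy of ${\rm tr}(S)$ for constancy of $\alpha$. So the hypothesis on the dimension is doing real work.

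Alternatively, and more in the spirit of the immediately preceding Proposition \ref{kminustraceA}, one can argue somewhat more directly. For the forward implication, if ${\rm tr}(S)$ is constant then $d({\rm tr}(S)) = 0$, so Proposition \ref{kminustraceA} gives $g(\bar{R}(JN,N)N,JX) = 0$ for all $X \in {\mathcal C}$. Since $\phi$ preserves ${\mathcal C}$ and $JX = \phi X$ for $X \in {\mathcal C}$, one has $\{JX : X \in {\mathcal C}\} = {\mathcal C}$; combining this with the curvature symmetry $g(\bar{R}(JN,N)N,N) = 0$ forces $\bar{R}_N(JN) = \bar{R}(JN,N)N \in {\mathbb R}JN$, that is, $JN$ is an eigenvector of $\bar{R}_N$. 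For the converse, if $\bar{R}_N(JN) \in {\mathbb R}JN$ then the right-hand side of Proposition \ref{kminustraceA} vanishes on ${\mathcal C}$ (using $g(JN,JX) = g(N,X) = 0$ for $X \in {\mathcal C} \subset TM$), which yields $d({\rm tr}(S))(X) = 0$ for all $X \in {\mathcal C}$; to conclude one still needs $d({\rm tr}(S))(\xi) = 0$, and that is precisely where the Hessian-symmetry argument underlying Proposition \ref{alphaconstant} is required. For this reason the first route is the cleaner one.

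I do not expect a new obstacle here. The honest difficulties — establishing the constancy of $\rho$ for $n > 2$ and carrying out the Hessian computation that identifies when $\alpha$ is constant — are already packaged into Propositions \ref{rhoconstant} and \ref{alphaconstant}, and the remaining task is only to assemble them, together with the mean curvature formula of Proposition \ref{Okumura}, in the correct order.
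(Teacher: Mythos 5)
Your first (and preferred) route is correct, but it is genuinely different from the paper's own proof. The paper proves this proposition directly from Proposition \ref{kminustraceA}: for the direction ``$JN$ eigenvector $\Rightarrow$ constant mean curvature'' it sets $f = {\rm tr}(S)$, $\sigma = df(\xi)$, observes $df = \sigma\eta$, and then uses $0 = d^2f = d\sigma\wedge\eta + 2\rho\sigma\omega$ together with a contraction to force $\sigma = 0$; for the other direction it reads off $g(\bar{R}(JN,N)N,JX)=0$ on ${\mathcal C}$ and concludes $\bar{R}(JN,N)N\in{\mathbb R}JN$ exactly as in your second sketch. You instead bypass Proposition \ref{kminustraceA} entirely: constancy of $\rho$ (Proposition \ref{rhoconstant}, where $n>2$ enters) turns ${\rm tr}(S)=\alpha+2(n-1)\rho$ into the equivalence ``${\rm tr}(S)$ constant $\iff$ $\alpha$ constant,'' and Proposition \ref{alphaconstant} finishes. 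There is no circularity, since Propositions \ref{rhoconstant}, \ref{Okumura} and \ref{alphaconstant} are all established independently of this statement, so your chain is a complete proof and arguably a cleaner one: it makes Proposition \ref{allandrhoconstant} an immediate corollary of \ref{alphaconstant} alone, whereas the paper needs both \ref{alphaconstant} and the present proposition to get that three-way equivalence. What the paper's route buys in exchange is Proposition \ref{kminustraceA} itself, the pointwise identity $d({\rm tr}(S))(X)=g(\bar{R}(JN,N)N,JX)$ on ${\mathcal C}$, which carries more information than the constancy statement and is of independent use; your analysis of the residual step $d({\rm tr}(S))(\xi)=0$ in the second route correctly identifies why that route is not quite self-contained without an additional symmetry/exactness argument, which is precisely the $d^2f=0$ computation the paper supplies.
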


\begin{proof}
We first assume that $JN$ is an eigenvector of the normal Jacobi operator $\bar{R}_N$ everywhere.
We put $f = {\rm tr}(S)$ and $\sigma = df(\xi)$. From Proposition \ref{kminustraceA} we see that
$ df = \sigma \eta $
and hence
$0 = d\sigma \wedge \eta + \sigma d\eta = d\sigma \wedge \eta + 2\rho \sigma \omega$.
In other words, we have
$0 = d\sigma(X)\eta(Y) - d\sigma(Y)\eta(X) + 2\rho\sigma g(\phi X, Y)$.
Replacing $Y$ by $\phi Y$ leads to
$0 = - d\sigma(\phi Y)\eta(X) + 2\rho\sigma g(\phi X, \phi Y)$.
By contracting this equation we obtain
$0 = 4(n-1)\rho\sigma$.
As $\rho$ is nonzero everywhere this implies $\sigma = 0$ and hence $df = 0$, which means that $f =  {\rm tr}(S)$ is constant. 

Conversely, assume that the mean curvature of $M$ is constant. From Proposition \ref{kminustraceA} we get
$0 = g(\bar{R}(JN,N)N,JX) $
for all $X \in {\mathcal C}$. This implies that $\bar{R}(JN,N)N$ is perpendicular to ${\mathcal C}$ everywhere.  Since $g(\bar{R}(JN,N)N,N) = 0$ we conclude that $\bar{R}(JN,N)N \in {\mathbb R}JN$, that is, $JN$ is an eigenvector of $\bar{R}_N$ everywhere.
\end{proof}

From Proposition \ref{alphaconstant} and Proposition \ref{kminustraceAconstant} we immediately get:

\begin{prop}\label{allandrhoconstant}
Let $M$ be a contact hypersurface of a K\"{a}hler  manifold $\bar{M}^n$, $n > 2$. Then the following statements are equivalent:
\begin{itemize}
\item[(i)] $\alpha$ is constant;
\item[(ii)] $M$ has constant mean curvature;
\item[(iii)] $JN$ is an eigenvector of the normal Jacobi operator $\bar{R}_N = \bar{R}(\cdot,N)N$ everywhere.
\end{itemize}
\end{prop}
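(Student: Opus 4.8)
The plan is to recognize that this proposition requires no new computation: both of the two immediately preceding results are stated as biconditionals whose common hinge is precisely condition (iii). Specifically, Proposition~\ref{alphaconstant} asserts that $\alpha$ is constant if and only if $JN$ is an eigenvector of the normal Jacobi operator $\bar{R}_N$ everywhere, which is exactly the equivalence (i)~$\Leftrightarrow$~(iii). Independently, Proposition~\ref{kminustraceAconstant} asserts that $M$ has constant mean curvature if and only if $JN$ is an eigenvector of $\bar{R}_N$ everywhere, which is exactly (ii)~$\Leftrightarrow$~(iii).

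With these two biconditionals in hand, the remaining work is purely logical. Since (i) and (ii) are each equivalent to the same statement (iii), transitivity of logical equivalence yields the chain (i)~$\Leftrightarrow$~(iii)~$\Leftrightarrow$~(ii), and hence all three conditions are mutually equivalent. I would therefore simply cite the two propositions and invoke transitivity, presenting the argument in a single sentence. No case analysis, frame computation, or curvature identity is needed at this stage.

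There is essentially no obstacle here, because the substantive analytic work has already been carried out upstream. The genuinely nontrivial steps live in Propositions~\ref{alphaconstant} and~\ref{kminustraceAconstant}: the former rests on the curvature identity~(\ref{dalpha}) combined with the symmetry of the Hessian of $\alpha$ and the fact that $\rho$ is nowhere zero, while the latter rests on contracting the differentiated contact condition $\phi S + S\phi = 2\rho\phi$ against an adapted orthonormal frame and on the repeated use of the Codazzi equation, both exploiting the K\"{a}hler identity that $\overline{Ric}$ commutes with $J$. Once those are granted, the present proposition is an immediate corollary, and my proof would consist of nothing more than recording that the common condition (iii) simultaneously links (i) and (ii). The only mild care required is to confirm that the condition appearing in the two cited propositions is verbatim the same statement (iii), so that the transitivity step is genuinely automatic rather than hiding a further hypothesis.
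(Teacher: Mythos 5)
Your proposal is correct and is exactly the paper's argument: the authors state that Proposition~\ref{allandrhoconstant} follows immediately from Propositions~\ref{alphaconstant} and~\ref{kminustraceAconstant}, which give (i)~$\Leftrightarrow$~(iii) and (ii)~$\Leftrightarrow$~(iii) respectively, so transitivity is all that is needed. Your check that condition (iii) is verbatim the same in both cited propositions is the right (and only) point of care.
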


The Riemannian universal covering of an $n$-dimensional K\"{a}hler manifold $\bar{M}$ with constant holomorphic sectional curvature $c$ is either the complex projective space ${\mathbb C}P^n$, the complex Euclidean space ${\mathbb C}^n$ or the complex hyperbolic space ${\mathbb C}H^n$ equipped with the standard K\"{a}hler metric of constant holomorphic sectional curvature $c > 0$, $c = 0$ and $c < 0$, respectively. The Riemannian curvature tensor $\bar{R}$ of an $n$-dimensional K\"{a}hler manifold $\bar{M}$ with constant holomorphic sectional curvature $c$ is given by
\[
\bar{R}(X,Y)Z = \frac{c}{4}\big(g(Y,Z)X - g(X,Z)Y + g(JY,Z)JX - g(JX,Z)JY - 2g(JX,Y)JZ\big).
\]
This implies $\bar{R}_N JN = \bar{R}(JN,N)N = cJN$,
and hence $JN$ is an eigenvector of the  Jacobi operator $\bar{R}_N = \bar{R}(\cdot,N)N$ everywhere. We thus get from Proposition \ref{allandrhoconstant}:

\begin{prop}\label{contactincsf}
Let $M$ be a contact hypersurface of an $n$-dimensional K\"{a}hler manifold $\bar{M}$ with constant holomorphic sectional curvature, $n \geq 3$. Then $M$ has constant mean curvature.
\end{prop}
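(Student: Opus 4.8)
The plan is to deduce the statement directly from the equivalence established in Proposition \ref{allandrhoconstant}. Since $\bar{M}$ has complex dimension $n \geq 3 > 2$, that proposition applies and tells us that $M$ has constant mean curvature if and only if condition (iii) holds, namely that $JN$ is an eigenvector of the normal Jacobi operator $\bar{R}_N = \bar{R}(\cdot,N)N$ at every point of $M$. Thus the entire problem reduces to verifying this single eigenvector property, and because that property is purely pointwise and algebraic it should follow immediately from the special form of the curvature tensor.

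First I would invoke the standard classification of the universal cover, which identifies $\bar{M}$ with one of ${\mathbb C}P^n$, ${\mathbb C}^n$ or ${\mathbb C}H^n$ and in particular shows that the Riemannian curvature tensor $\bar{R}$ has the explicit algebraic form displayed above, with $c$ the constant holomorphic sectional curvature. I would then substitute $X = JN$ and $Y = Z = N$ into that formula and simplify the five summands using only the elementary relations $g(N,N) = 1$, $g(JN,N) = 0$ and $J^2 = -\mathrm{id}$. With this bookkeeping the signed contributions of the five bracket terms reduce respectively to $JN$, $0$, $0$, $JN$ and $2JN$, so the bracket equals $4JN$ and hence $\bar{R}(JN,N)N = cJN$.

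This computation shows that $JN$ is an eigenvector of $\bar{R}_N$ with eigenvalue $c$ at every point of $M$, so condition (iii) of Proposition \ref{allandrhoconstant} is satisfied automatically, independently of the contact condition and of the particular hypersurface. By the equivalence (ii) $\Leftrightarrow$ (iii) in that proposition, $M$ has constant mean curvature, which is the desired conclusion.

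I expect no genuine obstacle here: all of the analytic content — the Codazzi-based identities of Propositions \ref{Asquared} and \ref{kminustraceA} and the resulting equivalence in Proposition \ref{allandrhoconstant} — has already been carried out in the general Kähler setting, so the work of this proposition is confined to recognizing the ambient curvature type and performing the short sign-sensitive computation of $\bar{R}(JN,N)N$. The only point requiring care is the correct tracking of signs among the five curvature terms; once that is done the eigenvector property, and with it the constancy of the mean curvature, is immediate.
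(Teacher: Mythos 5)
Your proposal is correct and follows exactly the paper's route: the explicit curvature tensor of a K\"{a}hler manifold of constant holomorphic sectional curvature yields $\bar{R}(JN,N)N = cJN$, so $JN$ is an eigenvector of $\bar{R}_N$ everywhere, and Proposition \ref{allandrhoconstant} gives constant mean curvature. Your term-by-term evaluation of the five summands is also accurate, so there is nothing to add.
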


For $n \geq 3$, the contact hypersurfaces in ${\mathbb C}^n$ and ${\mathbb C}P^n$ were classified by Okumura (\cite{O66}) and in ${\mathbb C}H^n$ by Vernon (\cite{V87}). 
A remarkable consequence of their classifications is the following:

\begin{cor}
Every complete contact hypersurface in a simply connected complete K\"{a}hler manifold with constant holomorphic sectional curvature is homogeneous..
\end{cor}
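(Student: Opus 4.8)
The plan is to reduce the statement to the three classical model spaces and then to read off the conclusion from the already cited classifications together with the completeness hypothesis. Since $\bar{M}$ is simply connected, complete and K\"{a}hler with constant holomorphic sectional curvature $c$, the classical uniformization of complex space forms identifies it, up to a holomorphic isometry, with ${\mathbb C}P^n$ if $c>0$, with ${\mathbb C}^n$ if $c=0$, and with ${\mathbb C}H^n$ if $c<0$, each carrying its standard K\"{a}hler metric. Both the property of being a contact hypersurface and the property of being a homogeneous Riemannian manifold are preserved by holomorphic isometries, so it suffices to treat $M$ inside each of these three models.

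In every model the dimension restriction $n\geq 3$ is in force (for $n=2$ Proposition \ref{dimtwo} and the subsequent discussion show that genuinely inhomogeneous examples exist, so the statement must be read for $n\geq 3$). Under this restriction Proposition \ref{contactincsf} shows that $M$ automatically has constant mean curvature, and hence $M$ meets the hypotheses of the classification theorems of Okumura \cite{O66} for ${\mathbb C}^n$ and ${\mathbb C}P^n$ and of Vernon \cite{V87} for ${\mathbb C}H^n$. The next step is simply to import the content of these theorems: each contact hypersurface appearing in their lists---geodesic hyperspheres, tubes around totally geodesic complex subspaces or around a totally geodesic real form, and horospheres---is an orbit of a suitable subgroup of the isometry group of the ambient space, and is therefore a homogeneous Riemannian manifold. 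Nothing new is computed here; one only records the homogeneity of each listed type.

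The final, and only genuinely geometric, step is the passage from "open part of a homogeneous hypersurface" to "homogeneous hypersurface", and this is where completeness is used. The classifications determine $M$ only up to congruence with an open part of one of the connected model hypersurfaces $M'$. The inclusion $M\hookrightarrow M'$ is an isometric open embedding between connected Riemannian manifolds of the same dimension, hence a local isometry; since $M$ is complete, a standard geodesic-completion argument shows that $M$ is closed in $M'$, so $M$ is open and closed in the connected manifold $M'$ and therefore $M=M'$. Thus $M$ is itself one of the homogeneous model hypersurfaces.

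I expect the main obstacle to be not any single computation but the bookkeeping of the three classification lists: one must check that, for $n\geq 3$, the cited results of Okumura and Vernon together exhaust all contact hypersurfaces, and that each entry is genuinely homogeneous, including the noncompact examples in ${\mathbb C}H^n$ whose homogeneity is realised by solvable or parabolic subgroups rather than by an obvious isotropy group. By contrast, the completeness argument is routine once one notes that the model hypersurfaces are connected and complete.
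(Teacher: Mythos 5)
Your proposal is correct and follows the same route as the paper, which presents this corollary as an immediate consequence of the Okumura--Vernon classifications (with the restriction $n\geq 3$ understood from the preceding sentence); you merely make explicit the uniformization of complex space forms, the homogeneity of each listed model hypersurface, and the standard completeness argument upgrading ``open part of'' to equality. No gaps.
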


For $n = 2$, however, there are more contact hypersurfaces, as we will now show for ${\mathbb C}^2$.

\begin{thm}\label{classC2}
Let $C$ be a complex curve in ${\mathbb C}^2$ whose second fundamental form is nonzero at each point. Assume that $r \in {\mathbb R}_+$ is chosen so that $C$ has no focal point at distance $r$. Then the tube of radius $r$ around $C$ is a contact hypersurface of ${\mathbb C}^2$.
\end{thm}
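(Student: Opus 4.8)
The plan is to apply the two–dimensional characterization in Proposition \ref{dimtwo}: since $\mathbb{C}^2$ is a K\"ahler manifold of complex dimension $2$, the tube $M_r$ is a contact hypersurface exactly when it is a Hopf hypersurface and $\mathrm{tr}(S)\neq\alpha$ everywhere. Thus the whole argument reduces to computing the shape operator $S$ of $M_r$ and verifying these two conditions. First I would set up the geometry of the tube. Because $C$ is a complex curve it is a K\"ahler submanifold, so its normal space $\nu_pC$ is $J$-invariant; for a unit normal $\nu$ the tube point $q=p+r\nu$ has unit normal $N=\nu$ (in flat space the radial direction is normal to the tube, the nearest-point projection being unique as there is no focal point at distance $r$). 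Then $\xi=-JN=-J\nu$ lies in $\nu_pC$, the tangent space splits orthogonally as $T_qM_r=T_pC\oplus\mathbb{R}J\nu$, and $\mathcal{C}=\ker(\eta)=T_pC$, which is $J$-invariant.

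Next I would compute $S$ by differentiating the radial normal field. Varying $\nu$ along the fibre circle $\theta\mapsto p+r(\cos\theta\,\nu+\sin\theta\,J\nu)$ shows that $J\nu$, hence $\xi$, is a principal direction with $S\xi=-\tfrac1r\xi$; this already gives that $M_r$ is Hopf, with $\alpha=-\tfrac1r$ (using the outward orientation $N=\nu$). Varying the foot point along a curve in $C$ and using $\bar\nabla_X\nu=-S^C_\nu X$ (flatness together with the Weingarten formula of $C$, where $S^C_\nu$ is the shape operator of $C$) leads to the identity $S(I-rS^C_\nu)v=S^C_\nu v$ for $v\in T_pC$. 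Consequently the principal curvatures of $M_r$ on $\mathcal{C}$ are the eigenvalues of $S^C_\nu(I-rS^C_\nu)^{-1}$.

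The key structural input is that for a complex curve $S^C_\nu$ is trace-free. Since $S^C_\nu$ anticommutes with $J$ on $T_pC$ it has eigenvalues $\pm k(\nu)$, with eigenvectors of the form $w,Jw$. Writing $h=\sigma(e_1,e_1)$ for a unit $e_1\in T_pC$ (the only independent value of the second fundamental form of a complex curve), a short computation gives $\langle S^C_\nu e_1,e_1\rangle=\langle h,\nu\rangle$, $\langle S^C_\nu e_1,Je_1\rangle=\langle Jh,\nu\rangle$, $\langle S^C_\nu Je_1,Je_1\rangle=-\langle h,\nu\rangle$, and hence
\[
k(\nu)^2=\langle h,\nu\rangle^2+\langle Jh,\nu\rangle^2=\|h\|^2,
\]
because $\{h,Jh\}$ is an orthogonal frame of $\nu_pC$. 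Thus $k(\nu)=\|h\|$ is independent of $\nu$, and "second fundamental form nonzero at $p$" is equivalent to $k:=k(p)>0$ for every unit normal $\nu$. The eigenvalues of $S$ on $\mathcal{C}$ are therefore $\lambda_+=k/(1-rk)$ and $\lambda_-=-k/(1+rk)$, where $1-rk\neq0$ precisely because $C$ has no focal point at distance $r$ (the focal points along $\gamma(t)=p+t\nu$ occur at $t=1/k$). Then
\[
\mathrm{tr}(S)-\alpha=\lambda_++\lambda_-=\frac{2rk^2}{1-r^2k^2}\neq 0,
\]
and Proposition \ref{dimtwo} yields that $M_r$ is a contact hypersurface.

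The main obstacle is the shape-operator computation for the tube — in particular checking that $\xi$ really is a principal direction, i.e.\ the Hopf property — together with the linear-algebra fact that the magnitude $k(\nu)$ of the second fundamental form of a complex curve does not depend on the chosen unit normal $\nu$. This last point is what makes the hypothesis of nonvanishing second fundamental form translate cleanly into the inequality $\mathrm{tr}(S)\neq\alpha$ required by Proposition \ref{dimtwo}.
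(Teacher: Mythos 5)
Your proof is correct and follows essentially the same route as the paper: both reduce to computing the principal curvatures $\frac{k}{1-rk}$ and $\frac{-k}{1+rk}$ of the tube on ${\mathcal C}$ from the $\pm k$ principal curvatures of the minimal (complex) curve $C$, and both conclude from the nonvanishing of their sum $\frac{2rk^{2}}{1-r^{2}k^{2}}$. The only difference is cosmetic: the paper verifies the identity of Proposition \ref{char1} directly (citing the tube formula from the literature), whereas you pass through its two-dimensional specialization Proposition \ref{dimtwo} and supply some details the paper leaves implicit, namely the derivation of the tube shape operator, the Hopf property $S\xi=-\frac{1}{r}\xi$, and the independence of $k(\nu)$ from the choice of unit normal $\nu$.
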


\begin{proof}
Let $C$ be a complex curve in ${\mathbb C}^2$ whose second fundamental form is nonzero at each point. Assume that $r \in {\mathbb R}_+$ is chosen so that $C$ has no focal point at distance $r$. Then the tube $C_r$ of radius $r$ around $C$ is well-defined. Since every complex submanifold of a K\"{a}hler manifold is a minimal submanifold, the principal curvatures of $C$ with respect to a unit normal vector are of the form $\frac{1}{\theta}$ and $-\frac{1}{\theta}$ for some $\theta > 0$. The corresponding principal curvatures of the tube $C_r$ at the corresponding point are $\frac{1}{\theta - r}$ and $-\frac{1}{\theta + r}$ with the same principal curvature spaces via the usual identification of tangent vectors in a Euclidean space (see Theorem 8.2.2 in \cite{BCO03} for how to calculate the principal curvatures of tubes). These two principal curvature spaces span the maximal complex subspace ${\mathcal C}$ and hence we get $S \phi + \phi S = \frac{2r}{\theta^2 - r^2}\phi$, which shows that $C_r$ is a contact hypersurface. 
\end{proof}

\section{Contact hypersurfaces in  $Q^n$ and $Q^{n*}$}\label{application}

We now consider the case of the complex quadric $Q^n = SO_{n+2}/SO_nSO_2$ and its noncompact dual space $Q^{n*} = SO_{n,2}/SO_nSO_2$, $n \geq 3$. The complex quadric (and its noncompact dual) have two geometric structures which completely describe its Riemannian curvature tensor $\bar{R}$. The first geometric structure is of course the K\"{a}hler structure $(J,g)$. The second geometric structure is a rank two vector bundle ${\mathcal A}$ over $Q^n$ which contains an $S^1$-bundle of real structures on the tangent spaces of $Q^n$. 
This bundle has for instance been studied by Smyth in \cite{S67} in the context of complex hypersurfaces. The complex quadric $Q^n$ is a complex hypersurface in ${\mathbb C}P^{n+1}$ and the bundle ${\mathcal A}$ is just the family of shape operators with respect to the normal vectors in the rank two normal bundle. We refer also to \cite{BS13} for more details about ${\mathcal A}$.
The Riemannian curvature tensor $\bar{R}$ of $Q^n$ is given by
\begin{eqnarray*}
\bar{R}(X,Y)Z & = & g(Y,Z)X - g(X,Z)Y 
 +  g(JY,Z)JX - g(JX,Z)JY - 2g(JX,Y)JZ \\
 & & +\, g(AY,Z)AX - g(AX,Z)AY 
 + \,g(JAY,Z)JAX - g(JAX,Z)JAY,
\end{eqnarray*}
where $A$ is an arbitrary real structure in ${\mathcal A}$. 
For $Q^{n*}$ the Riemannian curvature tensor has the same form with a minus sign in front of it. For a real structure $A \in {\mathcal A}$ we denote by $V(A)$ its $(+1)$-eigenspace; then $JV(A)$ is the $(-1)$-eigenspace of $A$.
By ${\mathcal Q}$ we denote the maximal ${\mathcal A}$-invariant subbundle of $TM$.

A nonzero tangent vector $W$ of $Q^n$ resp.\ $Q^{n*}$ is called singular if it is tangent to more than one maximal flat in $Q^n$ resp.\ $Q^{n*}$. There are two types of singular tangent vectors in this situation:
\begin{itemize}
\item[(i)] If there exists a real structure $A \in {\mathcal A}$ such that $W \in V(A)$, then $W$ is singular. Such a singular tangent vector is called ${\mathcal A}$-principal.
\item[(ii)] If there exist a real structure $A \in {\mathcal A}$ and orthonormal vectors $X,Y \in V(A)$ such that $W/||W|| = (X+JY)/\sqrt{2}$, then $W$ is singular. Such a singular tangent vector is called ${\mathcal A}$-isotropic.
\end{itemize}
For every unit tangent vector $W$ of $Q^n$ resp.\ $Q^{n*}$ there exist a real structure $A \in {\mathcal A}$ and orthonormal vectors $X,Y \in V(A)$ such that
$W = \cos(t)X + \sin(t)JY$
for some $t \in [0,\pi/4]$. The singular tangent vectors correspond to the values $t = 0$ and $t = \pi/4$.

We now apply the results in Section \ref{general} to $Q^n$ (resp.\ $Q^{n*}$).
Inserting $X = JN$ and $Y = Z = N$ into the expression for the curvature tensor $\bar{R}$ of $Q^n$, and using the fact that $AJ = -JA$, we get
$ \bar{R}(JN,N)N =  4JN  + 2g(AN,N)AJN  - 2g(AJN,N)AN$.
If $N$ is ${\mathcal A}$-principal, that is, $AN = N$ for some real structure $A \in {\mathcal A}$, then we have 
$\bar{R}(JN,N)N =  2JN  $.
If $N$ is not ${\mathcal A}$-principal, then there exists a real structure $A \in {\mathcal A}$ such that
$N = \cos(t)Z_1 + \sin(t)JZ_2$
for some orthonormal vectors $Z_1,Z_2 \in V(A)$ and $0 < t \leq \frac{\pi}{4}$. This implies
$AN = \cos(t)Z_1 - \sin(t)JZ_2$, 
$JN =  \cos(t)JZ_1 - \sin(t)Z_2$ and 
$AJN = - \cos(t)JZ_1 - \sin(t)Z_2$.
Then we have $g(AN,N) = \cos(2t)$ and $g(AJN,N)  = 0$, and therefore
$\bar{R}(JN,N)N =  4JN  + 2\cos(2t)AJN$.
Thus $JN$ is an eigenvector of $\bar{R}_N$ if and only if $t = \frac{\pi}{4}$ or $AJN$ is a multiple of $JN$. Since both $\cos(t)$ and $\sin(t)$ are nonzero for $0 < t \leq \frac{\pi}{4}$ it is easy to see from the above expressions that $AJN$ is never a multiple of $JN$. Since $t = \frac{\pi}{4}$ if and only if $N$ is ${\mathcal A}$-isotropic we therefore conclude that $JN$ is an eigenvector of $\bar{R}_N$ everywhere if and only if $N$ is ${\mathcal A}$-principal or ${\mathcal A}$-isotropic everywhere. 
The Riemannian curvature tensor of the noncompact dual symmetric space $Q^{n*}$ is just the negative of the Riemannian curvature tensor of $Q^n$.
We therefore have proved:

\begin{prop}\label{JNquadric}
Let $M$ be a real hypersurface of the complex quadric $Q^n$ (resp.\ of its noncompact dual space $Q^{n*}$), $n \geq 3$. Then the following statements are equivalent:
\begin{itemize}
\item[(i)] $JN$ is an eigenvector of the normal Jacobi operator $\bar{R}_N = \bar{R}(\cdot,N)N$ everywhere;
\item[(ii)] $N$ is ${\mathcal A}$-principal or ${\mathcal A}$-isotropic everywhere;
\item[(iii)] $N$ is a singular tangent vector of $Q^n$ (resp.\ of $Q^{n*}$) everywhere.
\end{itemize}
\end{prop}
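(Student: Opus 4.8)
The plan is to reduce everything to an explicit computation of the vector $\bar{R}(JN,N)N$ from the given curvature formula for $Q^n$, since condition (i) is nothing but the requirement that this vector be a multiple of $JN$. First I would substitute $X = JN$ and $Y = Z = N$ into the curvature tensor. The five terms not involving $A$ collapse to $4JN$ after using $J^2 = -\mathrm{id}$ and $g(JN,N) = 0$. For the four terms involving $A$ I would repeatedly exploit that $A$ is a symmetric involution anticommuting with $J$ (so that $AJN = -JAN$ and $JAJN = AN$); pairing the terms off then yields $\bar{R}(JN,N)N = 4JN + 2g(AN,N)AJN - 2g(AJN,N)AN$, valid for every real structure $A \in {\mathcal A}$.

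Next I would establish the equivalence (i) $\Leftrightarrow$ (ii) by distinguishing whether $N$ is ${\mathcal A}$-principal. If $AN = N$ for some $A \in {\mathcal A}$, then $g(AN,N) = 1$, while $g(AJN,N) = -g(JAN,N) = -g(JN,N) = 0$ and $AJN = -JAN = -JN$, so the formula gives $\bar{R}(JN,N)N = 2JN$ and (i) holds. If $N$ is not ${\mathcal A}$-principal, I would use the normal form $N = \cos(t)Z_1 + \sin(t)JZ_2$ with orthonormal $Z_1,Z_2 \in V(A)$ and $0 < t \le \pi/4$, compute $AN$, $JN$ and $AJN$ directly from $AZ_i = Z_i$, and verify the inner products $g(AN,N) = \cos(2t)$ and $g(AJN,N) = 0$ (the latter using orthogonality of $V(A)$ and $JV(A)$). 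This reduces the formula to $\bar{R}(JN,N)N = 4JN + 2\cos(2t)AJN$, so (i) holds exactly when $\cos(2t) = 0$ or $AJN \in {\mathbb R}JN$.

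The decisive step, and the one I expect to require the most care, is ruling out the degenerate alternative $AJN \in {\mathbb R}JN$: comparing the explicit expressions $AJN = -\cos(t)JZ_1 - \sin(t)Z_2$ and $JN = \cos(t)JZ_1 - \sin(t)Z_2$ forces the proportionality constant to be simultaneously $-1$ and $+1$, which is impossible since $\cos(t)$ and $\sin(t)$ are both nonzero on $(0,\pi/4]$. Hence, in the non-principal case, (i) holds precisely when $t = \pi/4$, which is exactly the condition that $N$ be ${\mathcal A}$-isotropic; this finishes (i) $\Leftrightarrow$ (ii). The equivalence (ii) $\Leftrightarrow$ (iii) is then immediate from the representation of unit tangent vectors as $\cos(t)X + \sin(t)JY$, whose singular values are precisely $t = 0$ (${\mathcal A}$-principal) and $t = \pi/4$ (${\mathcal A}$-isotropic). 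Finally, since the curvature tensor of $Q^{n*}$ is the negative of that of $Q^n$, the vector $\bar{R}(JN,N)N$ merely changes sign and the eigenvector condition is unaffected, so the same argument proves the dual case verbatim.
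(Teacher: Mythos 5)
Your proposal is correct and follows essentially the same route as the paper: the same substitution into the curvature formula yielding $\bar{R}(JN,N)N = 4JN + 2g(AN,N)AJN - 2g(AJN,N)AN$, the same case split via the normal form $N = \cos(t)Z_1 + \sin(t)JZ_2$, and the same argument ruling out $AJN \in {\mathbb R}JN$ before passing to the dual space by a sign change. No gaps.
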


We now insert $X = JN$ and $Y = N$ into the equation for $\bar{R}$ and assume that $Z \in TM$. Then we get
$\bar{R}(JN,N)Z =  2\eta(Z)N + 2JZ  + 2g(AN,Z)AJN - 2g(AJN,Z)AN$.
In particular, for $Z \in {\mathcal C}$ this gives
$\bar{R}(JN,N)JZ =  -2Z  + 2g(AJN,Z)AJN + 2g(AN,Z)AN$.
If $Z \in {\mathcal C}$ is a principal curvature vector of $M$ with corresponding principal curvature $\lambda$, we obtain from Proposition \ref{Asquared}:

\begin{prop}\label{Asquaredquadric}
Let $M$ be a contact hypersurface of $Q^n$ resp.\ of $Q^{n*}$. Then we have
\begin{eqnarray*}
& & \epsilon (\lambda^2  - 2\rho \lambda + (\alpha\rho + \epsilon))Z \\
& = &  g(Z,AN)(AN-g(AN,N)N) +  g(Z,AJN)(AJN - g(AJN,JN)JN)  
\end{eqnarray*}
for all $Z\in {\mathcal C}$ with $SZ = \lambda Z$, where $\epsilon = +1$ for $Q^n$ and $\epsilon = -1$ for $Q^{n*}$.
\end{prop}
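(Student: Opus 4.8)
The plan is to obtain this directly from Proposition~\ref{Asquared}, which already expresses $2(\lambda^2 - 2\rho\lambda + \alpha\rho)Z$ as the $\mathcal{C}$-component of $\bar{R}(JN,N)JZ$ for any $Z \in \mathcal{C}$ with $SZ = \lambda Z$. Since that proposition is stated for arbitrary K\"{a}hler manifolds, it applies verbatim to both $Q^n$ and $Q^{n*}$, and the whole task reduces to evaluating $(\bar{R}(JN,N)JZ)_{\mathcal{C}}$ explicitly with the quadric curvature tensor.

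First I would invoke the computation recorded just before the statement, namely $\bar{R}(JN,N)JZ = -2Z + 2g(AJN,Z)AJN + 2g(AN,Z)AN$ for $Z \in \mathcal{C}$ in the case of $Q^n$. Because the curvature tensor of $Q^{n*}$ is the negative of that of $Q^n$, the same expression holds with an overall factor $\epsilon$, giving $\bar{R}(JN,N)JZ = \epsilon\bigl(-2Z + 2g(AJN,Z)AJN + 2g(AN,Z)AN\bigr)$ uniformly in both cases.

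Next I would project onto $\mathcal{C} = \ker(\eta)$. The term $-2Z$ is unchanged since $Z \in \mathcal{C}$, so only the projections of $AN$ and $AJN$ remain. Writing $W_{\mathcal{C}} = W - g(W,N)N - g(W,\xi)\xi$ with $\xi = -JN$, the essential simplification comes from the orthogonality relations $g(AN,JN) = 0$ and $g(AJN,N) = 0$; these are equal by symmetry of $A$, and both vanish by $AJ = -JA$ together with the representation $N = \cos(t)Z_1 + \sin(t)JZ_2$ with $Z_1,Z_2 \in V(A)$ orthonormal. Consequently $(AN)_{\mathcal{C}} = AN - g(AN,N)N$ and $(AJN)_{\mathcal{C}} = AJN - g(AJN,JN)JN$ (and one checks directly that these two vectors indeed lie in $\mathcal{C}$), which are precisely the vectors appearing on the right-hand side of the assertion.

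Finally I would substitute into Proposition~\ref{Asquared} to get $2(\lambda^2 - 2\rho\lambda + \alpha\rho)Z = \epsilon\bigl[-2Z + 2g(AJN,Z)(AJN - g(AJN,JN)JN) + 2g(AN,Z)(AN - g(AN,N)N)\bigr]$. Multiplying through by $\epsilon/2$ turns the left side into $\epsilon(\lambda^2 - 2\rho\lambda + \alpha\rho)Z$ and the right side into $-Z$ plus the two inner-product terms; moving $-Z$ across and using $Z = \epsilon^2 Z$ to fold it into the coefficient promotes $\alpha\rho$ to $\alpha\rho + \epsilon$, yielding exactly the stated identity. The only genuinely delicate points are verifying the two orthogonality relations that clean up the $\mathcal{C}$-projections and tracking the sign $\epsilon$ when passing between $Q^n$ and $Q^{n*}$; the remaining manipulation is routine.
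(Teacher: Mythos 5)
Your argument is correct and takes essentially the same route as the paper: the authors likewise substitute $X=JN$, $Y=N$ into the quadric curvature tensor to obtain $\bar{R}(JN,N)JZ=-2Z+2g(AJN,Z)AJN+2g(AN,Z)AN$ for $Z\in{\mathcal C}$, feed this into Proposition~\ref{Asquared}, and reverse the overall sign for $Q^{n*}$. Your additional care with the ${\mathcal C}$-projection and the orthogonality relations $g(AN,JN)=g(AJN,N)=0$ simply makes explicit what the paper leaves implicit.
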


We will now investigate the normal vector field of a contact hypersurface.

\begin{prop}\label{normalquadric}
Let $M$ be a contact hypersurface of $Q^n$ resp.\ of $Q^{n*}$. Then the normal vector field $N$ cannot be ${\mathcal A}$-isotropic.
\end{prop}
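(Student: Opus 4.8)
The plan is to argue by contradiction: suppose that $N$ is $\mathcal{A}$-isotropic at every point of some open subset of $M$, and choose locally a real structure $A \in \mathcal{A}$ realizing this, so that $g(AN,N) = 0$ throughout. First I would record the elementary consequences of the isotropy condition $N = \frac{1}{\sqrt 2}(Z_1 + JZ_2)$ with $Z_1,Z_2 \in V(A)$ orthonormal: a short computation using $AJ = -JA$ gives
$g(AN,N) = g(AJN,N) = g(AN,JN) = g(AJN,JN) = 0$.
Hence $AN$ and $AJN$ are mutually orthogonal unit vectors lying in $\mathcal{C} = {\rm ker}(\eta)$, and the plane $\mathcal{P}$ spanned by $AN$ and $AJN$ is $\phi$-invariant, with $\phi(AN) = -AJN$ and $\phi(AJN) = AN$. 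A further such computation shows that $A$ maps $\mathcal{P}^\perp \cap \mathcal{C}$ into itself while interchanging $\mathcal{P}$ with the plane spanned by $N$ and $\xi$; thus $\mathcal{Q} := \mathcal{P}^\perp \cap \mathcal{C}$ is exactly the maximal $\mathcal{A}$-invariant subbundle of $TM$.

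Next I would feed this into Proposition \ref{Asquaredquadric}. Because the correction terms $g(AN,N)N$ and $g(AJN,JN)JN$ vanish in the isotropic case, the right-hand side of that proposition reduces to the orthogonal projection $\pi_{\mathcal{P}}$ of $Z$ onto $\mathcal{P}$, so the identity becomes the operator equation $S^2 - 2\rho S + \alpha\rho = \epsilon(\pi_{\mathcal{P}} - I)$ on $\mathcal{C}$. The symmetric operator on the left therefore has the two distinct eigenvalues $0$ (on $\mathcal{P}$) and $-\epsilon$ (on $\mathcal{Q}$). Since $S$ commutes with every polynomial in $S$, this forces $S$ to preserve both $\mathcal{P}$ and $\mathcal{Q}$, with $S^2 - 2\rho S + \alpha\rho = 0$ on $\mathcal{P}$ and $S^2 - 2\rho S + \alpha\rho + \epsilon = 0$ on $\mathcal{Q}$. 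Moreover, as an $\mathcal{A}$-isotropic vector $N$ is singular, so Proposition \ref{JNquadric} gives that $JN$ is an eigenvector of $\bar R_N$; hence Proposition \ref{allandrhoconstant} applies and $\alpha$, $\rho$, and therefore (using the two quadratic relations together with the $\phi$-pairing of Proposition \ref{Okumura}) all principal curvatures are constant.

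Up to this point everything is consistent, and indeed the eigenvalue picture just obtained reproduces the values of ${\rm tr}(S)$ and ${\rm tr}(S^2)$ predicted by Propositions \ref{Okumura} and \ref{traceA2}; so no contradiction can arise algebraically at a single point, and the obstruction must be differential. This is the decisive and hardest step. Here I would invoke the defining structure equation of the complex quadric, $\bar\nabla_X A = q(X)JA$ for an appropriate one-form $q$ (this is precisely where the special second geometric structure of $Q^n$ and $Q^{n*}$ enters), and combine it with the Weingarten formula $\bar\nabla_X N = -SX$ and the Hopf relation $S\xi = \alpha\xi$ to compute the tangential covariant derivatives $\nabla_X(AN)$ and $\nabla_X(AJN)$ explicitly. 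The essential feature is that, through $A$, these derivatives couple the plane $\mathcal{P}$ to the normal and Reeb directions, so the evolution of the distinguished frame $\{AN, AJN\}$ is rigidly tied to $\alpha$, $q(\xi)$, and the shape operator.

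Finally I would substitute these derivatives into the Codazzi equation $g((\nabla_X S)Y,Z) - g((\nabla_Y S)X,Z) = g(\bar R(X,Y)Z,N)$, evaluated on the distinguished directions $\xi$, $AN$, $AJN$ and on eigenvectors of $S|_{\mathcal{Q}}$, using the explicit quadric curvature tensor on the right. Each such evaluation yields a relation among the constant principal curvatures, the constant $\alpha$, the value $q(\xi)$, and $\rho$, and the expectation is that this overdetermined system is inconsistent unless $\rho = 0$. Since the contact condition requires $\rho$ to be nowhere zero, this contradiction establishes that $N$ cannot be $\mathcal{A}$-isotropic. The main difficulty I anticipate is purely one of bookkeeping: isolating and combining enough independent components of the Codazzi equation to corner $\rho$, while correctly tracking how $A$ transports vectors among $\mathcal{P}$, $\mathcal{Q}$, and the $\{N,\xi\}$-plane.
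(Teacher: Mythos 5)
Your setup is sound and tracks the paper closely up to the point where you establish that $S$ preserves $\mathcal{P}={\rm span}\{AN,AJN\}=\mathcal{C}\ominus\mathcal{Q}$ and $\mathcal{Q}$, with the two quadratic relations $S^2-2\rho S+\alpha\rho=0$ on $\mathcal{P}$ and $S^2-2\rho S+\alpha\rho+\epsilon=0$ on $\mathcal{Q}$; this is exactly the first half of the paper's argument. The genuine gap is your final step: you declare that the contradiction ``must be differential,'' propose to feed $\bar\nabla_XA=q(X)JA$ into the Codazzi equation evaluated on various distinguished directions, and then state only that ``the expectation is that this overdetermined system is inconsistent unless $\rho=0$.'' That is a plan, not a proof --- you never exhibit a single concrete identity that forces $\rho=0$, and nothing in your write-up guarantees that the Codazzi components you would collect are actually inconsistent rather than merely redundant with the algebraic relations you already have.

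What actually closes the argument is much more direct, and you already hold every ingredient for it: differentiate the two isotropy relations $g(AN,JN)=0$ and $g(AN,N)=0$ tangentially along $M$, using $\bar\nabla_XA=q(X)JA$ and the Weingarten formula $\bar\nabla_XN=-SX$. The $q$-terms drop out precisely because of the isotropy orthogonalities, and one is left with $g(SAJN,X)=0$ and $g(SAN,X)=0$ for all $X\in TM$, i.e.\ $SAJN=0$ and $SAN=0$. But $AN\in\mathcal{C}$ with $SAN=0$ forces, via the contact relation $S\phi+\phi S=2\rho\phi$ (Proposition \ref{Okumura}(ii)) and $\phi(AN)=-AJN$, that $SAJN=2\rho\,AJN$. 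Hence $\rho=0$, contradicting the contact condition. No Codazzi equation is needed, and the eigenspace analysis of $S$ on $\mathcal{P}$ and $\mathcal{Q}$, while correct, is not where the contradiction lives. As written, your proof is incomplete at its self-identified ``decisive and hardest step.''
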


\begin{proof}
We give the argument for $Q^n$, for $Q^{n*}$ it is analogous.
If $N$ is ${\mathcal A}$-isotropic we obtain from Proposition \ref{Asquaredquadric} that
$
(\lambda^2  - 2\rho \lambda + (\alpha\rho + 1))Z =   g(Z,AN)AN + g(Z,AJN)AJN = Z_{{\mathcal C} \ominus {\mathcal Q}}
$
for all $Z\in {\mathcal C}$ with $SZ = \lambda Z$. We decompose $Z = Z_{\mathcal Q} + Z_{{\mathcal C} \ominus {\mathcal Q}}$ into its ${\mathcal Q}$- and $({\mathcal C} \ominus {\mathcal Q})$-components. This implies
$(\lambda^2  - 2\rho \lambda + (\alpha\rho + 1))Z_{\mathcal Q} = 0$
and 
$(\lambda^2  - 2\rho \lambda +\alpha\rho)Z_{{\mathcal C} \ominus {\mathcal Q}} = 0$.
If $Z_{{\mathcal C} \ominus {\mathcal Q}} \neq 0$ then $\lambda^2  - 2\rho \lambda +\alpha\rho = 0$ and therefore $Z_{\mathcal Q} = 0$. It follows that ${\mathcal Q}$ and ${\mathcal C} \ominus {\mathcal Q}$ are invariant under the shape operator of $M$. 
There exists a one-form $q$ on $Q^n$ along $M$ such that $\bar\nabla_XA = q(X)JA$ for all $X \in TQ^n$ (see e.g.\ Proposition 7 in \cite{S67}). By differentiating the equation $g(AN,JN) = 0$ with respect to $X \in TM$ we get $g(SAJN,X) = 0$,
which implies $SAJN = 0$. By differentiating the equation $g(AN,N) = 0$ and using Proposition \ref{Okumura} we get
$0 =  g(SAN,X) $
for all $X \in TM$, which implies
$SAN = 0$ and thus $SAJN = 2\rho AJN$.
Altogether this yields  $\rho = 0$, which is a contradiction. It follows that $N$ cannot be ${\mathcal A}$-isotropic. 
\end{proof}

We now investigate the case when $N$ is ${\mathcal A}$-principal and $\bar{M} = Q^n$.
If $N$ is ${\mathcal A}$-principal, that is, if $AN = N$, we get from Proposition \ref{Asquaredquadric} that
$(\lambda^2  - 2\rho \lambda + (\alpha\rho + 1) )Z = 0$
for all $Z \in {\mathcal C}$ with $SZ = \lambda Z$.
Thus there are at most two distinct constant principal curvatures $\lambda$ and $\mu = 2\rho - \lambda$ on ${\mathcal C}$. 
We again use the fact that there exists a one-form $q$ on $Q^n$ along $M$ such that $\bar\nabla_XA = q(X)JA$ for all $X \in TQ^n$. By differentiating the equation $g(AN,JN) = 0$ with respect to $X \in TM$ we get 
$q(X) = 2g(ASX,JN) =  2g(SAJN,X) = - 2g(SJAN,X) = - 2g(SJN,X) 
= -2\alpha g(JN,X) =  2\alpha \eta(X)$.
It follows that $\bar\nabla_XA = 0$ for all $X \in {\mathcal C}$. From $AN = N$ we get $AJN = -JAN = -JN$. Differentiating this equation with respect to $X \in {\mathcal C}$ gives $ASX = SX$. Thus, for all $Z \in {\mathcal C}$ with $SZ = \lambda Z$ resp.\ $SZ = \mu Z$ we get $\lambda AZ = \lambda Z$ and $\mu AZ = \mu Z$. If both $\lambda$ and $\mu$ are nonzero this implies $AZ = Z$ for all $Z \in {\mathcal C}$ and hence ${\rm tr}(A) = 2(n-1)$, which contradicts the fact that $A$ is a real structure and hence ${\rm tr}(A) = 0$. We thus may assume that $\lambda = 0$. If the corresponding principal curvature space $T_\lambda$ is $J$-invariant this implies $\rho = 0$, which is a contradiction. We thus must have $0 \neq \mu = 2\rho$ and $JT_\lambda = T_\mu$.
Thus we have shown that there are exactly two distinct constant principal curvatures $\lambda = 0$ and $\mu = 2\rho$ on ${\mathcal C}$. Moreover, we have
$J T_\lambda = T_\mu$ for the corresponding principal curvature spaces $T_\lambda$ and $T_\mu$. 
From Proposition \ref{Asquaredquadric} we also get the equation
 $\alpha \rho + 1 = 0$. 

For the dual manifold $Q^{n*}$ we have to consider the equation 
$(\lambda^2  - 2\rho \lambda + (\alpha\rho - 1) )Z = 0$,
but all other arguments remain the same.
Thus we have proved:

\begin{prop}\label{Aprincipalquadric}
Let $M$ be a contact hypersurface of $Q^n$ resp.\ of $Q^{n*}$ and assume that the normal vector field $N$ is ${\mathcal A}$-principal. Then $M$ has three distinct constant principal curvature $\alpha$, $\lambda = 0$ and $\mu = 2\rho$ with corresponding principal curvature spaces ${\mathbb R}JN$, $T_\lambda \subset {\mathcal C}$ and $T_\mu \subset {\mathcal C}$ satisfying $JT_\lambda = T_\mu$. The principal curvature $\alpha$ is given by $\alpha = -\frac{1}{\rho}$ when $\bar{M} = Q^n$ and $\alpha = \frac{1}{\rho}$ when $\bar{M} = Q^{n*}$. Moreover, $T_\mu = V(A) \cap {\mathcal C}$ and $T_\lambda = JV(A) \cap{\mathcal C}$.
\end{prop}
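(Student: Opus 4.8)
The plan is to read off everything from the curvature identity in Proposition~\ref{Asquaredquadric} once the hypothesis $AN = N$ is inserted, supplemented by the first-order behaviour of the real structure $A$. First I would substitute $AN = N$ into Proposition~\ref{Asquaredquadric}. Since $A$ is a real structure we have $AJ = -JA$, so $AJN = -JN$, and for every $Z \in {\mathcal C}$ the coefficients $g(Z,AN) = g(Z,N)$ and $g(Z,AJN) = -g(Z,JN) = g(Z,\xi)$ both vanish, because $Z$ is orthogonal to $N$ and to $\xi$. Hence the right-hand side of the identity vanishes on ${\mathcal C}$, leaving the scalar equation
\[
\lambda^2 - 2\rho\lambda + (\alpha\rho + \epsilon) = 0
\]
for every principal curvature $\lambda$ of $S$ on ${\mathcal C}$. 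By Proposition~\ref{rhoconstant} the function $\rho$ is constant; since $N$ is ${\mathcal A}$-principal, Proposition~\ref{JNquadric} shows $JN$ is an eigenvector of $\bar R_N$, so $\alpha$ is constant by Proposition~\ref{allandrhoconstant}. Thus the quadratic has constant coefficients, and $S|_{\mathcal C}$ admits at most the two constant principal curvatures $\lambda$ and $\mu = 2\rho - \lambda$ (the second forced by Proposition~\ref{Okumura}(ii)); together with $\alpha$ on ${\mathbb R}\xi = {\mathbb R}JN$ these are the candidate principal curvatures.

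To decide which root vanishes and to locate the eigenspaces I would invoke the one-form $q$ (Smyth) with $\bar\nabla_X A = q(X)JA$. Differentiating the identically zero function $g(AN,JN)$ along $X \in TM$, and using the Weingarten formula $\bar\nabla_X N = -SX$, the Kähler condition $\bar\nabla J = 0$, the symmetry of $A$, the relation $AJ = -JA$, and $S\xi = \alpha\xi$ from Proposition~\ref{Okumura}, I expect the computation to collapse to $q(X) = 2\alpha\eta(X)$. In particular $q$ annihilates ${\mathcal C}$, so $\bar\nabla_X A = 0$ for all $X \in {\mathcal C}$.

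Next I would differentiate $AJN = -JN$ in a direction $X \in {\mathcal C}$. Using $\bar\nabla_X A = 0$ on ${\mathcal C}$, $\bar\nabla_X N = -SX$ and $AJ = -JA$, the two sides reduce to $ASX = SX$, i.e.\ $S({\mathcal C}) \subseteq V(A)$. Consequently, if $Z \in {\mathcal C}$ satisfies $SZ = \lambda Z$ with $\lambda \neq 0$, then $AZ = Z$. If both principal curvatures on ${\mathcal C}$ were nonzero, $A$ would restrict to the identity on all of ${\mathcal C}$, forcing ${\rm tr}(A) = 2(n-1) \neq 0$ and contradicting ${\rm tr}(A) = 0$ for a real structure. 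Hence one root vanishes; relabelling, $\lambda = 0$ and $\mu = 2\rho \neq 0$, the inequality holding because $\rho$ is nowhere zero. Since $\phi = J$ on ${\mathcal C}$, Proposition~\ref{Okumura}(ii) gives $JT_\lambda = T_\mu$, and $0 \neq 2\rho$ shows $T_\lambda$ is not $J$-invariant, so the eigenspaces are genuinely interchanged by $J$.

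Finally I would extract the remaining assertions. Evaluating the quadratic at $\lambda = 0$ gives $\alpha\rho + \epsilon = 0$, hence $\alpha = -1/\rho$ on $Q^n$ ($\epsilon = 1$) and $\alpha = 1/\rho$ on $Q^{n*}$ ($\epsilon = -1$); in particular $\alpha \neq 0 = \lambda$ and $\mu = 2\rho \neq 0$. For the eigenspaces, $\mu \neq 0$ already yields $T_\mu \subseteq V(A) \cap {\mathcal C}$; and since $A$ is an isometry with $A\xi = -\xi$, every $v \in V(A)$ is orthogonal to $\xi$, so $V(A) \cap {\mathcal C} = V(A) \cap N^{\perp}$ has dimension $n-1 = \dim T_\mu$, forcing $T_\mu = V(A) \cap {\mathcal C}$. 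Applying $J$ and using $J{\mathcal C} = {\mathcal C}$ together with $JT_\mu = T_\lambda$ then gives $T_\lambda = JV(A) \cap {\mathcal C}$. I expect the derivation of $q(X) = 2\alpha\eta(X)$ and the trace argument to be the crux of the proof; the eigenvalue computation and the final eigenspace identification are then bookkeeping with the decomposition $T_pQ^n = V(A) \oplus JV(A)$.
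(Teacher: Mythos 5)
Your proposal is correct and follows essentially the same route as the paper: vanishing of the right-hand side of Proposition \ref{Asquaredquadric} under $AN=N$, the Smyth one-form computation giving $q(X)=2\alpha\eta(X)$ and hence $ASX=SX$ on ${\mathcal C}$, and the trace argument ${\rm tr}(A)=0$ to force one root of the quadratic to vanish. The only additions are explicit justifications (constancy of $\alpha$ via Proposition \ref{allandrhoconstant}, and the dimension count identifying $T_\mu$ with $V(A)\cap{\mathcal C}$) that the paper leaves implicit; these are correct and harmless.
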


We now prove Theorem \ref{classquadric}. 
Without loss of generality we may assume that $\rho > 0$ (otherwise replace $N$ by $-N$). Since $\rho$ is constant there exists $0 < r < \frac{\pi}{2\sqrt{2}} $ such that $\rho = \frac{1}{\sqrt{2}} \tan(\sqrt{2}r)$. From Proposition \ref{Aprincipalquadric} we then get $\alpha = -\sqrt{2}\cot(\sqrt{2}r)$ and $\mu = \sqrt{2}\tan(\sqrt{2}r)$. 

The normal Jacobi operator $\bar{R}_N = \bar{R}(\cdot,N)N$ has two eigenvalues $0$ and $2$ with corresponding eigenspaces $T_\lambda \oplus {\mathbb R}N$ and $T_\mu \oplus {\mathbb R}JN$. Thus the normal Jacobi operator and the shape operator of $M$ commute, which allows us to use Jacobi field theory to determine explicitly the focal points of $M$. For $p \in M$ we denote by $\gamma_p$ the geodesic in $Q^n$ with $\gamma_p(0)=p$ and $\dot{\gamma}_p(0)=N_p$, and by $F$ the smooth map
$F: M \to Q^n\ ,\ p \mapsto \gamma_p(r)$.
The differential $d_pF$ of $F$ at $p$ can be computed using Jacobi vector fields by means of
$d_pF(X)=Y_X(r)$,
where $Y_X$ is the Jacobi vector field along $\gamma_p$ with initial values $Y_X(0)=X$ and $Y_X'(0)=-SX$.  
This leads to the following expressions for the Jacobi vector fields along ${\gamma}_p$:
\begin{equation*}
Y_X(r)=\begin{cases}
(\cos(\sqrt{2}r) - \frac{\alpha}{\sqrt{2}}\sin(\sqrt{2}r))E_X(r) & \text{if $X \in T_\alpha$,}\\
(\cos(\sqrt{2}r) - \frac{\mu}{\sqrt{2}}\sin(\sqrt{2}r))E_X(r) & \text{if $X\in T_\mu$,}\\
E_X(r) & \text{if $X\in T_\lambda$},
\end{cases}
\end{equation*}
where $E_X$ is the parallel vector field along ${\gamma}_p$ with $E_X(0)=X$.
This shows that ${\rm Ker}(dF) = T_\mu$ and thus $F$ has constant rank $n$.
Therefore, locally, $F$ is a submersion onto a submanifold $P$ of $Q^n$ of real dimension $n$. Moreover, the tangent space $T_{F(p)}P$ of $P$ at $F(p)$ is obtained by parallel translation of $(T_\lambda \oplus T_\alpha)(p)$ along $\gamma_{p}$. Thus the submanifold $P$ is a totally real submanifold of $Q^n$ of real dimension $n$.

The vector $\eta_{p} = \dot{\gamma}_{p}(r)$ is a unit normal vector of $P$
at $F(p)$ and the shape operator $S_{\eta_{p}}$ of $P$ with respect to
$\eta_{p}$ can be calculated from the equation
$S_{\eta_{p}}Y_X(r) = - Y_X^\prime(r)$,
where $X \in (T_\lambda \oplus T_\alpha)(p)$. The above expression for the Jacobi vector fields $Y_X$ implies $Y_X^\prime(r) = 0$ for $X \in T_\lambda(p)$ and $X \in T_{\alpha}(p)$, and therefore $S_{\eta_{p}} = 0$.
The vectors of the form $\eta_{q}$, $q \in F^{-1}(\{F(p)\})$,
form an open subset of the unit
sphere in the normal space of $P$ at $F(p)$. Since $S_{\eta_{q}}$ vanishes
for all $\eta_{q}$ it follows that $P$ is an $n$-dimensional totally geodesic totally real submanifold of $Q^n$.
Rigidity of totally geodesic submanifolds now
implies that the entire submanifold
$M$ is an open part of a tube of radius $r$
around an $n$-dimensional connected, complete,
totally geodesic, totally real submanifold $P$ of $Q^n$. Such a submanifold is also known as a real form of $Q^n$. The real forms of the complex quadric $Q^n$ are well-known, see for example \cite{CN77} or \cite{K08}. This shows that $P$ is either a sphere $S^n$ or $(S^a \times S^b)/\{{\pm I}\}$ with $a + b = n$ and $a,b \geq 1$. However, we see from the above calculations that at each point the tangent space of $P$ corresponds to the $(-1)$-eigenspace of a real structure on $Q^n$, which rules out $(S^a \times S^b)/\{{\pm I}\}$. It follows that $P$ is a sphere $S^n$ embedded in $Q^n$ as a real form.

We remark that the focal set of a real form $S^n$ in $Q^n$ is a totally geodesic complex hyperquadric $Q^{n-1} \subset Q^n$. So the tubes around $S^n$ can also be regarded as tubes around $Q^{n-1}$. 

We now proceed with the proof of Theorem \ref{classquadricdual}.
We again assume $\rho > 0$. We can write  $\rho = \frac{1}{\sqrt{2}} \tanh(\sqrt{2}r)$, $\rho = \frac{1}{\sqrt{2}} $ or $\rho = \frac{1}{\sqrt{2}} \coth(\sqrt{2}r)$ with some $r \in {\mathbb R}_+$.
The normal Jacobi operator $\bar{R}_N = \bar{R}(\cdot,N)N$ has two eigenvalues $0$ and $-2$ with corresponding eigenspaces $T_\lambda \oplus {\mathbb R}N$ and $T_\mu \oplus {\mathbb R}JN$.
The Jacobi vector fields are given by 
\begin{equation*}
Y_X(r)=\begin{cases}
(\cosh(\sqrt{2}r) - \frac{\alpha}{\sqrt{2}}\sinh(\sqrt{2}r))E_X(r) & \text{if $X \in T_\alpha$,}\\
(\cosh(\sqrt{2}r) - \frac{\mu}{\sqrt{2}}\sinh(\sqrt{2}r))E_X(r) & \text{if $X\in T_\mu$,}\\
E_X(r) & \text{if $X\in T_\lambda$},
\end{cases}
\end{equation*} 
We now distinguish  three cases:

Case 1: $\rho = \frac{1}{\sqrt{2}} \tanh(\sqrt{2}r)$. From Proposition \ref{Aprincipalquadric} we then get $\alpha = \sqrt{2}\coth(\sqrt{2}r)$ and $\mu = \sqrt{2}\tanh(\sqrt{2}r)$. 
Here we get ${\rm Ker}(dF) = T_\alpha$ and thus $F$ has constant rank $2(n-1)$. Analogously to the compact case we can deduce that $M$ is an open part of a tube around a totally geodesic, complex submanifold $P$ of $Q^{n*}$ of complex dimension $n-1$. Using duality of symmetric spaces and \cite{K08} we can see that $P$ is a totally geodesic $Q^{(n-1)*}$ in $Q^{n*}$.

Case 2: $\rho = \frac{1}{\sqrt{2}}$. From Proposition \ref{Aprincipalquadric} we then get $\alpha = \mu = \sqrt{2}$. In this case $M$ does not have any focal points. For $X \in T_\alpha$ or $X \in T_\mu$ the Jacobi vector fields are $Y_X(r) = \exp(-\sqrt{2}r)E_X(r)$ and remain bounded for $r \to \infty$. Together with $Y_X(r) = E_X(r)$ for $X \in T_\lambda$ this implies that all normal geodesics of $M$ are asymptotic to each other. From this we see that $M$ is a horosphere whose center at infinity is given by an equivalence class of asymptotic geodesics whose tangent vectors are all ${\mathcal A}$-principal. Thus the center at infinity of the horosphere is a singular point of type ${\mathcal A}$-principal.

Case 3: $\rho = \frac{1}{\sqrt{2}} \coth(\sqrt{2}r)$. From Proposition \ref{Aprincipalquadric} we then get $\alpha = \sqrt{2}\tanh(\sqrt{2}r)$ and $\mu = \sqrt{2}\coth(\sqrt{2}r)$. In this situation we get ${\rm Ker}(dF) = T_\mu$ and thus $F$ has constant rank $n$. This case is analogous to the compact situation and we deduce that $M$ is an open part of a tube around a real form ${\mathbb R}H^n$ of $Q^{n*}$. 

The arguments given above can be carried out in reverse order to show that all the resulting hypersurfaces are in fact contact hypersurfaces.
This finishes the proof of Theorems \ref{classquadric} and  \ref{classquadricdual}

\end{document}